\newtheorem{thm}{Theorem}
\newtheorem*{thm*}{Theorem}
\newtheorem{lem}{Lemma}
\newtheorem{cnj}{Conjecture}
\newtheorem{qst}{Question}
\newtheorem{obs}{Observation}
\newtheorem{defn}{Definition}
\title{Graph Operations and Upper Bounds on Graph Homomorphism Counts}
\author{Luke Sernau}
\begin{document}
\maketitle
\begin{abstract}
We construct a family of countexamples to a conjecture of Galvin \cite{mainConjReference}, which stated that for any $n$-vertex, $d$-regular graph $G$ and any graph $H$ (possibly with loops),
\[\hom(G,H) \leq \max\left\lbrace\hom(K_{d,d}, H)^{\frac{n}{2d}}, \hom(K_{d+1},H)^{\frac{n}{d+1}}\right\rbrace,\]
where $\hom(G,H)$ is the number of homomorphisms from $G$ to $H$.

By exploiting properties of the graph tensor product and graph exponentiation, we also find new infinite families of $H$ for which the bound stated above on $\hom(G,H)$ holds for all $n$-vertex, $d$-regular $G$.

In particular we show that if $H_{\rm WR}$ is the complete looped path on three vertices, also known as the Widom-Rowlinson graph, then 
$$
{\hom}(G,H_{\rm WR}) \leq {\hom}(K_{d+1},H_{\rm WR})^\frac{n}{d+1}
$$ 
for all $n$-vertex, $d$-regular $G$. This verifies a conjecture of Galvin.
\end{abstract}
\section{Introduction}
Graph homomorphisms are an important concept in many areas of graph theory. A graph homomorphism is simply an adjacency-preserving map between a graph $G$ and a graph $H$. That is, for given graphs $G$ and $H$, a function $\phi: V(G) \rightarrow V(H)$ is said to be a homomorphism from $G$ to $H$ if for every edge $uv \in E(G)$, we have $\phi(u)\phi(v) \in E(H)$ (Here, as throughout, all graphs are simple, meaning without multi-edges, but they are permitted to have loops). When $H=K_q$, the complete loop-free graph on $q$ vertices, a homomorpism from $G$ to $H$ is just a proper $q$-coloring of $G$. For this reason homomorphisms from $G$ to $H$ are often referred to as $H$-colorings of $G$.

While graph homomorphisms have proven to be a very useful tool, many extremal problems involving them remain open \cite{cutler}. One such question is the following.

\begin{qst} \label{mainQuestion}
Let $H$ be an arbitrary graph. Fix natural numbers $d$ and $n$, with $d < n$. What $d$-regular graphs $G$ on $n$ vertices maximize $\hom(G,H)$, the number of homomorphisms from $G$ to $H$?
\end{qst}

This question remains open, but one conjecture (first given in \cite{mainConjReference}) is that the maximizer graph for any $H$ is one of two possibilities.

\begin{cnj}\label{fullcnj}
Fix an arbitrary graph $H$. Let $G$ be a loop-free $d$-regular graph on $n$ vertices. Then
\[\hom(G,H) \leq \max\left\lbrace\hom(K_{d,d}, H)^{\frac{n}{2d}}, \hom(K_{d+1},H)^{\frac{n}{d+1}}\right\rbrace.\]
\end{cnj}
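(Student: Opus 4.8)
The plan is to run the entropy method in the style of Kahn and of Galvin and Tetali. Let $\phi$ be a homomorphism from $G$ to $H$ chosen uniformly at random, so that the Shannon entropy $\mathbb{H}(\phi)$ equals $\log\hom(G,H)$; the goal is to bound $\mathbb{H}(\phi)$ from above by a sum of local contributions, one per vertex or per edge, and then to recognize each local contribution as the logarithm of the homomorphism count of the relevant extremal graph. First I would dispose of the bipartite case, where the known result of Galvin and Tetali already gives $\hom(G,H) \leq \hom(K_{d,d},H)^{n/(2d)}$: taking the bipartition and applying Shearer's inequality across the two sides, each edge-neighbourhood contributes a copy of $K_{d,d}$, and the arithmetic-mean--geometric-mean step produces exactly the first term of the maximum. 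Thus for bipartite $G$ the first bound holds outright, and in particular $\hom(G,H) \leq \max\{\hom(K_{d,d},H)^{n/(2d)}, \hom(K_{d+1},H)^{n/(d+1)}\}$.

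The substance of the conjecture is the non-bipartite case, and here my plan is to reduce to the bipartite case via the bipartite double cover $G \times K_2$, in the spirit of Zhao's treatment of the independent-set problem. Since $G \times K_2$ is $d$-regular, bipartite, and has $2n$ vertices, the Galvin--Tetali bound gives $\hom(G \times K_2, H) \leq \hom(K_{d,d},H)^{(2n)/(2d)} = \hom(K_{d,d},H)^{n/d}$. If one could establish the correlation-type inequality
\[
\hom(G,H)^2 \leq \hom(G \times K_2, H),
\]
then chaining the two displays would yield $\hom(G,H) \leq \hom(K_{d,d},H)^{n/(2d)}$ for every $d$-regular $G$, proving the conjecture (in fact showing that the first term alone suffices whenever this inequality holds). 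I would therefore try to prove the tensor inequality in general, by expanding both sides as sums over pairs $(f,g)$ of maps $V(G)\to V(H)$ and constructing an injection from valid pairs of homomorphisms into valid pairs for the double cover, or by a transfer-matrix argument on the adjacency matrix of $H$.

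The main obstacle is precisely this tensor inequality, and I expect it to be where the entire scheme stands or falls. For independent sets, and more generally when the relevant local structure is well behaved, the inequality $\hom(G,H)^2 \leq \hom(G \times K_2, H)$ is true, but for a general target $H$ there is no reason it should hold: the double-cover count couples the two copies $f,g$ through the constraints $f(u)g(v), f(v)g(u) \in E(H)$ rather than keeping them separate, and this coupling can \emph{decrease} the count when the adjacency matrix of $H$ carries negative eigenvalues. If the inequality fails for some $H$ and some non-bipartite $G$, the only remaining hope is that the excess is absorbed by the clique term $\hom(K_{d+1},H)^{n/(d+1)}$, which would demand an independent entropy argument built on a fractional clique cover of $G$ rather than an edge cover. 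I would stress-test this by computing both sides of the tensor inequality for small non-bipartite $G$ (odd cycles, complements of perfect matchings) against simple targets $H$ with indefinite adjacency spectrum; should I locate a pair for which $\hom(G,H)$ exceeds \emph{both} $\hom(K_{d,d},H)^{n/(2d)}$ and $\hom(K_{d+1},H)^{n/(d+1)}$, that would constitute a counterexample and show that the conjecture as stated cannot be established by this route.
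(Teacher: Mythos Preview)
Your plan cannot succeed, because the statement you are trying to prove is false; the paper's purpose in Section~3 is precisely to disprove it. You have in fact put your finger on the exact failure point: the inequality $\hom(G,H)^2 \leq \hom(G\times K_2,H)$ does not hold for general $H$, and when it fails the slack is \emph{not} always absorbed by the clique term. The paper's counterexample (Theorem~\ref{counterexampleMain}) makes both terms on the right-hand side small simultaneously: choose $H$ simple with no $(d+1)$-clique, so that $\hom(K_{d+1},kH)=0$ for every $k$; choose $G$ connected, $d$-regular, on $n<2d$ vertices with $\hom(G,H)>0$; then for the target $kH$ one has $\hom(G,kH)=k\hom(G,H)$ while $\hom(K_{d,d},kH)^{n/(2d)}=k^{n/(2d)}\hom(K_{d,d},H)^{n/(2d)}$, and since $n/(2d)<1$ the former overtakes the latter for $k$ large. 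Concretely, for $d\geq 4$ one can take $H=K_d$ and $G$ any connected $d$-regular graph on fewer than $2d$ vertices other than $K_{d+1}$ (Brooks' theorem guarantees $\hom(G,K_d)>0$).

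Your closing paragraph is therefore the correct instinct: the stress test you propose would have turned up exactly these examples, and the right conclusion is not a proof but a disproof. The portions of your outline that \emph{are} sound---the bipartite double cover reduction and the identification of ``bipartite reducible'' $H$ as those for which the $K_{d,d}$ bound holds unconditionally---are developed in the paper as Theorem~\ref{bipRed} and the surrounding discussion, but only as a tool for identifying \emph{special} $H$, not as a route to the general conjecture.
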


Note that if $2d$ divides $n$,
\[\hom(K_{d,d}, H)^{\frac{n}{2d}} = \hom\left(\frac{n}{2d}K_{d,d},H\right),\]
where by $\frac{n}{2d}K_{d,d}$ we mean the disjoint union of $\frac{n}{2d}$ copies of $K_{d,d}$. Likewise, if $d+1$ divides $n$,
\[\hom(K_{d+1}, H)^{\frac{n}{d+1}} = \hom\left(\frac{n}{d+1}K_{d+1},H\right).\]

So Conjecture \ref{fullcnj} is really making a claim about maximizer graphs, rather than merely asserting an upper bound. Namely, it asserts that up to some divisibility considerations, the maximizer of the number of homomorphisms is either the disjoint union of an appropriate number of copies of $K_{d,d}$ or the disjoint union of an appropriate number of copies of $K_{d+1}$. This conjecture arose as a correction to an earlier conjecture (made in \cite{bipartiteReference}), that $\hom(G,H) \leq \hom(K_{d,d}, H)^{\frac{n}{2d}}$, always. In \cite{bipartiteReference}, Galvin and Tetali were able to show this stronger form for the special case where $G$ is bipartite, and so it was natural to consider generalizing the result to all graphs. But taking $H$ to be the disjoint union of loops gives a case where disjoint copies of $K_{d+1}$ is the maximizer, and so we are forced to consider the weaker form given in Conjecture \ref{fullcnj}.

The first aim of this article is to show that Conjecture \ref{fullcnj} is false. Indeed, in Section \ref{counterexampleSection} we give counterexamples for all $d \geq 4$ (see Theorem \ref{counterexampleMain} for the exact statement).

It is now not clear what the correct version of Conjecture \ref{fullcnj} should be for general $H$. In the absence of a general statement, one sensible avenue of approach is to consider conditions on $H$ under which one of $\frac{n}{2d} K_{d,d}, \frac{n}{d+1} K_{d+1}$ is the maximizer of the number of homomorphisms for all $G$. In \cite{zhao2}, Zhao gave an infinite family of $H$ for which $\hom(G,H) \leq \hom(K_{d,d},H)^{n/2d}$ for all $n$-vertex, $d$-regular $G$, and in \cite{mainConjReference} Galvin exhibited infinitely many triples $(n,d,H)$ for which $\hom(G,H) \leq \hom(K_{d+1},H)^{n/(d+1)}$ for all $n$-vertex, $d$-regular G. The second aim of this article is to continue this line of investigation. We develop the properties of two graph operations, and then use them to establish a large family of choices of $H$ for which some number of copies of $K_{d,d}$ is the maximizer, as well as another large family for which an appropriate number of copies of $K_{d+1}$ is the maximizer.

Specifically, we prove the following.

\begin{thm}\label{fullthm}
Let $H_1$ and $H_2$ be graphs with the property that for all n and d and all $n$-vertex, $d$-regular $G$, we have $\hom(G,H_i) \leq \hom(K_{d,d},H_i)^{n/2d}$. Let $A$ be an arbitrary graph, and $B$ an arbitrary bipartite graph. If $\tilde{H}$ is any of $H_1 \times H_2$, $H_1^A$, or $A^B$, then for all $n$ and $d$ and all $n$-vertex, $d$-regular $G$, we have
\begin{equation}
\hom(G,\tilde{H}) \leq \hom(K_{d,d},\tilde{H})^{n/2d}. \label{eqnfromfullthm}
\end{equation}
\end{thm}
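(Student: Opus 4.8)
The plan is to establish it through Zhao's tensor trick. Call $H$ \emph{stable} if $\hom(F,H)^2\le\hom(F\times K_2,H)$ for every graph $F$. Two identities, valid for all graphs (loops permitted), carry the argument: the multiplicativity $\hom(F,H_1\times H_2)=\hom(F,H_1)\,\hom(F,H_2)$, and the exponential law $\hom(F,H^A)=\hom(F\times A,H)$. I will also use that $\times$ is associative and commutative, that the product of any graph with a bipartite graph is bipartite, and that the bipartite double cover of a bipartite graph $\Gamma$ is $2\Gamma$. Finally, recall (Zhao) that a stable $H$ automatically satisfies the bound: taking $F=G$ a $d$-regular $n$-vertex graph gives $\hom(G,H)^2\le\hom(G\times K_2,H)$, and since $G\times K_2$ is bipartite and $d$-regular on $2n$ vertices, the Galvin--Tetali bound for bipartite domains \cite{bipartiteReference} (which assumes nothing about the target) gives $\hom(G\times K_2,H)\le\hom(K_{d,d},H)^{2n/2d}$. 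Hence it suffices to prove that $\tilde H$ is stable; the hypothesis on $H_1,H_2$ will be used only to know that they are themselves stable.

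Stability passes through the tensor product termwise: $\hom(F,H_1\times H_2)^2=\hom(F,H_1)^2\hom(F,H_2)^2\le\hom(F\times K_2,H_1)\hom(F\times K_2,H_2)=\hom(F\times K_2,H_1\times H_2)$. Using $H^{A_1\sqcup A_2}=H^{A_1}\times H^{A_2}$ — a map out of a disjoint union is a pair of maps, and the defining edge conditions decouple — together with this case, I reduce to proving stability of $H_1^A$ for $A$ connected and of $A^B$ for $B$ connected; the single-vertex exponents give complete looped graphs (or $H_1$ itself when the vertex is looped), which are clearly stable.

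For $A^B$ with $B$ bipartite, no hypothesis on $A$ is needed. Since $B$ is bipartite, $F\times B$ is bipartite, so $(F\times B)\times K_2\cong 2(F\times B)$, and therefore
\[
\hom(F\times K_2,\,A^B)=\hom\bigl((F\times K_2)\times B,\,A\bigr)=\hom\bigl((F\times B)\times K_2,\,A\bigr)=\hom(F\times B,A)^2=\hom(F,A^B)^2,
\]
so $A^B$ is stable (with equality). For $H_1^A$ with $A$ arbitrary, I transport stability across the exponential law: using $(F\times A)\times K_2=(F\times K_2)\times A$,
\[
\hom(F,H_1^A)^2=\hom(F\times A,H_1)^2\le\hom\bigl((F\times A)\times K_2,\,H_1\bigr)=\hom\bigl((F\times K_2)\times A,\,H_1\bigr)=\hom(F\times K_2,H_1^A),
\]
the inequality being stability of $H_1$ applied to the graph $F\times A$. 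Thus $\tilde H$ is stable in all three cases, and so satisfies \eqref{eqnfromfullthm}.

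The manipulations above are routine, and the genuine point requiring care is the step connecting the stated hypothesis — that $H_1,H_2$ satisfy the bound for all regular $G$ — to the stability of $H_1,H_2$ that the $H_1\times H_2$ and $H_1^A$ cases invoke: one direction, stable $\Rightarrow$ bounded, is the argument recalled in the first paragraph, while the converse is exactly where Zhao's analysis of the two conditions must be brought in (or, equivalently, one simply assumes the hypothesis in its stable form). It is worth emphasising why stability rather than the bound itself is the property that propagates: the inequality $\hom(F,H_1)^2\le\hom(F\times K_2,H_1)$ carries \emph{no} regularity requirement on $F$, so it can be applied to $F\times A$, whereas a general exponent $A$ makes $G\times A$ neither regular nor bipartite, so the bound for $H_1$ and the Galvin--Tetali inequality cannot be used on it directly.
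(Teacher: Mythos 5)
Your proof follows essentially the same route as the paper: the paper defines the class $\mathcal{H}$ of ``bipartite reducible'' graphs (your ``stable''), shows membership implies the $K_{d,d}$ bound via Galvin--Tetali applied to the bipartite double cover, and establishes exactly your three closure computations (its Theorems \ref{tensorClosure}, \ref{expClosure} and \ref{bipartiteClosure}), so the only differences are cosmetic (your reduction to connected exponents is unnecessary but harmless). The hypothesis mismatch you flag at the end --- the theorem assumes only the $K_{d,d}$ bound for $H_1,H_2$ while the closure arguments need stability --- is present in the paper as well, which silently reads the hypothesis in its stable form, so your argument is complete to exactly the same extent as the paper's.
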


The operations of tensor product $H_1 \times H_2$ and graph exponentiation $A^B$ will be defined formally in Section \ref{graphOperationsSection}.

As an example of Theorem \ref{fullthm}, consider the graph $H$ consisting of one edge, with one endvertex looped. For this choice of $H$, it is easy to see that $\hom(G,H)$ is the number of independent sets in $G$ (that is, the number of sets of mutually non-adjacent vertices). Zhao \cite{Zhao} showed that in this case $\hom(G,H) \leq \hom(K_{d,d}, H)^{\frac{n}{2d}}$ holds for all $G$. Now applying Theorem \ref{fullthm} we find that (\ref{eqnfromfullthm}) holds for all $n$-vertex $d$-regular $G$ with $\tilde{H} = H \times H$. This graph turns out the be the graph on four vertices composed of a triangle with one vertex looped, and an additional vertex adjacent to the looped vertex.

It's worth noting that this is not a graph that can be addressed using the methods Zhao develops in \cite{zhao2}. Zhao provides the following sufficient condition.

\begin{thm}\label{zhaoscriterion}
For an arbitrary graph $H$, construct a graph $H^{\rm bst}$ with vertices $V(H^{\rm bst}) = V(H) \times V(H)$ and an edge between $(u, v)$ and $(u', v') \in V(H^{\rm bst})$ if and only if $uu' \in E(H)$, $vv' \in E(H)$, and either $uv' \notin E(H)$ or $u'v \notin E(H)$. If $H^{\rm bst}$ is bipartite, then for all $n$ and $d$ and all $n$-vertex, $d$-regular $G$,
\begin{equation}
\hom(G,H) \leq \hom(K_{d,d},H)^{n/2d}.
\end{equation}
\end{thm}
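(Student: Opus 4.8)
The plan is to reduce the general ($d$-regular) case to the bipartite case, which is already known: Galvin and Tetali \cite{bipartiteReference} proved that for every $H$ and every \emph{bipartite} $d$-regular graph on $N$ vertices, $\hom(\cdot,H)\le\hom(K_{d,d},H)^{N/2d}$. Applying this to the bipartite double cover $G\times K_2$ of $G$ (which is bipartite, $d$-regular, and has $2n$ vertices) gives $\hom(G\times K_2,H)\le\hom(K_{d,d},H)^{n/d}$, so it suffices to prove
\[
\hom(G,H)^2\ \le\ \hom(G\times K_2,H)
\]
and take square roots. This last inequality fails for general $H$ (otherwise Conjecture \ref{fullcnj} would hold), so the role of the hypothesis is precisely to secure it.

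The way $H^{\rm bst}$ enters is through a combinatorial reinterpretation of the two sides. The left side counts ordered pairs $(\phi,\psi)$ of homomorphisms $G\to H$; the right side counts what I will call \emph{compatible pairs}, ordered pairs $(\chi_0,\chi_1)$ of maps $V(G)\to V(H)$ with $\chi_0(u)\chi_1(v)\in E(H)$ and $\chi_0(v)\chi_1(u)\in E(H)$ for every $uv\in E(G)$. Call an edge $uv$ \emph{bad} for $(\phi,\psi)$ if $\phi(u)\psi(v)\notin E(H)$ or $\phi(v)\psi(u)\notin E(H)$. Since $\phi$ and $\psi$ are homomorphisms, the conditions $\phi(u)\phi(v)\in E(H)$ and $\psi(u)\psi(v)\in E(H)$ hold automatically, so comparing with the definition of $H^{\rm bst}$ shows that $uv$ is bad exactly when $(\phi(u),\psi(u))$ and $(\phi(v),\psi(v))$ are adjacent in $H^{\rm bst}$. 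Hence the subgraph $G_{\rm bad}$ of bad edges is a homomorphic preimage of $H^{\rm bst}$ under $v\mapsto(\phi(v),\psi(v))$, and if $H^{\rm bst}$ is bipartite then $G_{\rm bad}$ is bipartite.

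Given this, I would repair a pair of homomorphisms into a compatible pair by a \emph{swap}: choose a $2$-coloring of $G_{\rm bad}$, let $W$ be one color class, and set $(\phi',\psi')=(\psi,\phi)$ on $W$ and $(\phi',\psi')=(\phi,\psi)$ off $W$. A short check shows every edge is then compatible: a bad edge crosses $W$, and on a crossing edge $uv$ the two compatibility conditions become $\psi(u)\psi(v)\in E(H)$ and $\phi(u)\phi(v)\in E(H)$, which hold as $\phi,\psi$ are homomorphisms; a non-bad edge with both ends on one side imposes exactly the conditions expressing that it is not bad. This gives a map from pairs of homomorphisms $G\to H$ to compatible pairs, and the theorem follows once this map is injective. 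I expect the injectivity to be the real difficulty: one must choose $W$ \emph{canonically} from $(\phi,\psi)$ — say from a fixed $2$-coloring of $H^{\rm bst}$ together with a tie-breaking rule on the components and isolated vertices of $G_{\rm bad}$ — and then give a procedure recovering which vertices were swapped from the resulting compatible pair. The components of $H^{\rm bst}$ on which the coordinate-swap automorphism $(a,b)\mapsto(b,a)$ exchanges the two sides of the bipartition are where this reconstruction needs genuine care.
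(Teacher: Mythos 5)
First, note that the paper itself does not prove this statement; it is quoted from Zhao \cite{zhao2}, so there is no in-paper argument to compare against. Your outline is nonetheless a faithful reconstruction of Zhao's ``bipartite swapping trick'': the reduction via the bipartite double cover to $\hom(G,H)^2\le\hom(G\times K_2,H)$, the identification of $\hom(G\times K_2,H)$ with compatible pairs, the observation that $v\mapsto(\phi(v),\psi(v))$ carries the bad subgraph $G_{\rm bad}$ into $H^{\rm bst}$ (so $G_{\rm bad}$ is bipartite), and the verification that swapping on a color class of $G_{\rm bad}$ yields a compatible pair are all correct. But as written the argument is incomplete: you state that the theorem ``follows once this map is injective'' and then leave injectivity unproved. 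That is not a loose end one can wave at --- it is the entire content of the trick, and without it you have established nothing quantitative.

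The gap can be closed, and more cheaply than your final paragraph suggests. The observation you are missing is that the bad edge set is recoverable from the image. If $(\chi_0,\chi_1)$ is obtained from $(\phi,\psi)$ by swapping on a color class $W$ of $G_{\rm bad}$, then on an edge $uv$ crossing $W$ (say $u\in W$) one has $\chi_0(u)\chi_0(v)=\psi(u)\phi(v)$ and $\chi_1(u)\chi_1(v)=\phi(u)\psi(v)$, while on a non-crossing edge the parallel products are $\phi(u)\phi(v)$ and $\psi(u)\psi(v)$, which are edges of $H$ automatically; a short check then shows that $\{uv\in E(G):\chi_0(u)\chi_0(v)\notin E(H)\ \text{or}\ \chi_1(u)\chi_1(v)\notin E(H)\}$ is exactly the bad set of $(\phi,\psi)$. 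Hence if you choose $W$ by a rule depending only on the edge set of $G_{\rm bad}$ --- for instance, in each connected component of $G_{\rm bad}$ take the color class containing the least vertex in a fixed ordering of $V(G)$, and put no vertex outside $V(G_{\rm bad})$ into $W$ (this last point matters: swapping at a vertex meeting no bad edge would alter the image undetectably) --- then from the image you compute the bad set, hence $W$, hence the preimage by un-swapping, and injectivity follows. This sidesteps the difficulty you flag about components of $H^{\rm bst}$ on which $(a,b)\mapsto(b,a)$ exchanges the sides of the bipartition: that problem is real, but only if you insist on defining $W$ by pulling back a fixed $2$-coloring of $H^{\rm bst}$, in which case the vertexwise image genuinely fails to record whether a swap occurred (already for $H=K_2$, where $(1,2)$ and $(2,1)$ are adjacent in $H^{\rm bst}$ and so receive opposite colors). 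Choosing $W$ from the component structure of $G_{\rm bad}$ instead makes the issue disappear.
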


Our choice of $\tilde{H}$ does not meet this criterion, because $\tilde{H}^{\rm bst}$ contains an odd cycle (the construction is straightforward, and has been omitted).

We also make use of the tensor product and the graph exponent to establish a family of choices of $H$ for which $\hom(G, H)$ is upper bounded by $\hom(K_{d+1}, H)^{\frac{n}{d+1}}$.

\begin{thm}\label{widomAndFriends}
Let $G$ be an arbitrary $d$-regular graph on $n$ vertices. For an arbitrary graph $H$ and bipartite graph $B$,
\[\hom(G, l(H^B)) \leq \hom(K_{d+1}, l(H^B))^{\frac{n}{d+1}}\]
where $l(G)$ denotes the induced subgraph of $G$ formed by the set of looped vertices of $G$.
\end{thm}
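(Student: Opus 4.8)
The plan is to move the two target-side operations $l(\cdot)$ and $(\cdot)^{B}$ onto the source: this turns $\hom(G,l(H^{B}))$ into a count of homomorphisms into $H$ from a bipartite graph, after which one can invoke the bipartite case of the Galvin--Tetali inequality.

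Write $G^{\circ}$ for the graph obtained from $G$ by adding a loop at each vertex, so that each vertex of $G^{\circ}$ has exactly $d+1$ neighbours (itself included). The first step is the pair of identities
\[\hom(G,l(H^{B}))=\hom(G^{\circ}\times B,\,H),\qquad \hom(K_{d+1},l(H^{B}))=\hom(K_{d+1}^{\circ}\times B,\,H).\]
For the left-hand one, the vertices of $l(H^{B})$ are exactly the looped vertices of $H^{B}$ and $l(H^{B})$ is an induced subgraph of $H^{B}$, so a homomorphism into $H^{B}$ from a graph all of whose vertices are looped is the same thing as a homomorphism into $l(H^{B})$; hence $\hom(G,l(H^{B}))=\hom(G^{\circ},H^{B})$, and the defining adjunction $\hom(F\times B,H)=\hom(F,H^{B})$ of graph exponentiation (Section~\ref{graphOperationsSection}) finishes it. The same argument applies with $K_{d+1}$ in place of $G$.

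The gain is that, because $B$ is bipartite with sides $X$ and $Y$, every edge of $G^{\circ}\times B$ joins $V(G)\times X$ to $V(G)\times Y$ --- an edge of the tensor product lies over an edge of $B$, which crosses the bipartition --- so $G^{\circ}\times B$ is a loop-free bipartite graph. On the target side, $K_{d+1}^{\circ}\times B$ is the graph obtained from $B$ by blowing up each vertex into an independent set of size $d+1$ and joining two such classes completely whenever the underlying vertices of $B$ are adjacent. In the basic case $B=K_{2}$ this blow-up is exactly $K_{d+1,d+1}$, and $G^{\circ}\times K_{2}$ is $(d+1)$-regular and bipartite on $2n$ vertices, so the bipartite case of the Galvin--Tetali bound \cite{bipartiteReference}, applied to $G^{\circ}\times K_{2}$, gives
\[\hom(G,l(H^{K_{2}}))=\hom(G^{\circ}\times K_{2},H)\le\hom(K_{d+1,d+1},H)^{\frac{2n}{2(d+1)}}=\hom(K_{d+1},l(H^{K_{2}}))^{\frac{n}{d+1}},\]
which is the statement for $B=K_{2}$; taking $H$ to be the single edge with one looped endpoint, one checks that $l(H^{K_{2}})$ is the Widom--Rowlinson graph $H_{\rm WR}$, so this already yields the Widom--Rowlinson bound.

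For an arbitrary bipartite $B$ the same reduction applies unchanged, but now $G^{\circ}\times B$ has degree $(d+1)\deg_{B}(a)$ at a vertex $(u,a)$, so it is regular only when $B$ is; the main obstacle is to push the Galvin--Tetali argument through for the irregular bipartite graph $G^{\circ}\times B$ in such a way that the exponents collapse to precisely $\tfrac{n}{d+1}$ against $\hom(K_{d+1}^{\circ}\times B,H)$. The natural device would be an edgewise, Kahn-type form of the bound, $\hom(G',H)\le\prod_{uv\in E(G')}\hom(K_{d_{u},d_{v}},H)^{1/(d_{u}d_{v})}$ for bipartite $G'$: applied to $G^{\circ}\times B$ it yields a product indexed by the edges of $B$ raised to the $n$-th power, and applied to $K_{d+1}^{\circ}\times B$ it yields the same product raised to the $(d+1)$-st power. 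Showing that this last application is sharp --- equivalently, that $K_{d+1}^{\circ}\times B$ sits in the extremal case of the bipartite inequality --- is exactly where the bipartiteness of $B$ has to be exploited in full, and is the step I expect to be most delicate; it is transparent when $B$ is a disjoint union of complete bipartite graphs, in particular for $B=K_{2}$, and the passage to general bipartite $B$ is the heart of the proof. The reduction from connected $G$ to an arbitrary $d$-regular $G$ is then the standard multiplicativity over components, using that a $d$-regular graph with $c$ components has at least $c(d+1)$ vertices.
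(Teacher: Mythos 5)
Your reduction $\hom(G,l(H^{B}))=\hom(G^{\circ},H^{B})=\hom(G^{\circ}\times B,H)$ is correct, and your argument is complete and matches the paper's in the special case $B=K_{2}$, where $G^{\circ}\times K_{2}$ is $(d+1)$-regular bipartite and a single application of the Galvin--Tetali bound finishes the job. But for general bipartite $B$ your proof has a genuine gap, which you yourself flag: $G^{\circ}\times B$ is irregular whenever $B$ is, and the step you would need --- an edge-wise bound $\hom(G',H)\le\prod_{uv\in E(G')}\hom(K_{d_{u},d_{v}},H)^{1/(d_{u}d_{v})}$ together with the claim that this bound is \emph{tight} for $K_{d+1}^{\circ}\times B$ so that the exponents collapse to $n/(d+1)$ --- is not proved and is not believable as stated. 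Tightness of the Galvin--Tetali-type bound for blow-ups of an arbitrary bipartite $B$ is exactly the kind of extremality statement the theorem is trying to establish, so assuming it is close to circular; nothing in your sketch uses the structure of $K_{d+1}^{\circ}\times B$ beyond $B$ being bipartite, and for a generic target $H$ there is no reason for equality to hold edge by edge.

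The missing idea is to \emph{not} move $B$ onto the source side. The paper keeps $H^{B}$ as the target throughout and exploits Theorem \ref{bipartiteClosure}: when $B$ is bipartite, the identity
\[
\hom(F,H^{B})^{2}=\hom(F\times K_{2},H^{B})
\]
holds with \emph{equality} for every $F$ (because $F\times B$ is bipartite, so $F\times B\times l_{2}=F\times B\times K_{2}$ by Lemma \ref{l2toK2}). Applying this with $F=G^{\circ}$ turns $\hom(G^{\circ},H^{B})$ into $\hom(G^{\circ}\times K_{2},H^{B})^{1/2}$, where $G^{\circ}\times K_{2}$ is $(d+1)$-regular bipartite on $2n$ vertices regardless of $B$; Theorem \ref{bipartite} then bounds this by $\hom(K_{d+1,d+1},H^{B})^{n/(2(d+1))}$, and running the same identity backwards with $F=K_{d+1}^{\circ}$ (using $K_{d+1,d+1}=K_{d+1}^{\circ}\times K_{2}$) converts $\hom(K_{d+1,d+1},H^{B})$ into $\hom(K_{d+1}^{\circ},H^{B})^{2}=\hom(K_{d+1},l(H^{B}))^{2}$. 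In other words, once $B$ is left in the exponent the general case is no harder than $B=K_{2}$, and no irregular-graph version of Galvin--Tetali is needed. Your $B=K_{2}$ computation is essentially this argument in disguise; the general case requires the observation that $H\mapsto H^{B}$ already absorbs all the bipartiteness of $B$ before the Galvin--Tetali step is taken.
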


When $H$ is a path on two vertices with a loop on the first, and $B$ is a single edge, $l(H^B)$ is easily seen to be a path on three vertices with a loop at each vertex. This graph, which we will call $H_{\rm WR}$, is often referred to as the {\em Widom-Rowlinson} graph and it encodes a well-studied model in statistical physics (see \cite{winkler} for a good discussion). Galvin conjectured (implicitly in \cite{mainConjReference} and explicitly in \cite{Kalamazoo}) that ${\hom}(G,H_{\rm WR}) \leq {\hom}(K_{d+1},H_{\rm WR})^{n/(d+1)}$ for all $n$-vertex $d$-regular $G$, making it the simplest non-trivial example of an $H$ for which unions of $K_{d+1}$ maximize the homomorphism count among regular graphs, and Theorem \ref{widomAndFriends} verifies this conjecture.

In an earlier version of this paper (arXiv:1510.01833v1) the right-hand side of the inequality in Theorem 3 was the weaker ${\hom}(K_d,l(H^B))^{n/d}$. This turned out to be the result of a small error in the proof. We are grateful to E. Cohen (personal communication) for pointing out this error. After the earlier version of this paper appeared, a separate result that also implies Galvin's conjecture was obtained by Cohen, Perkins and Tetali \cite{CPT}.

In Section \ref{frSection} we recall some results that will be important in what follows. In Section \ref{counterexampleSection} we give a construction providing counterexamples to Conjecture \ref{fullcnj}. The results from Section \ref{frSection} are then built upon in Section \ref{graphOperationsSection}, culminating in the proof of Theorem \ref{fullthm}. Lastly, in Section \ref{widomSection}, we continue the discussion of exponential graphs in order to prove Theorem \ref{widomAndFriends}.

\section{Foundational Results} \label{frSection}
Before delving into the main body of work, it will be useful to recall a handful of important results that we will draw on heavily. First, we will make extensive use of a theorem due to Lov\'asz \cite{Lovasz}, that makes it very easy to derive relationships between graphs. It does this by allowing arguments about graph homomorphisms to be used in reasoning about the graphs themselves.

\begin{thm}\label{homoToIso}
Let $G_1$ and $G_2$ (possibly with loops) be given. Suppose that for all choices of $H$,
\begin{equation}
\hom(G_1, H) = \hom(G_2, H)
\end{equation}
then $G_1 = G_2$.

Likewise, if $H_1$ and $H_2$ are given, and for all connected $G$ we have
\begin{equation}
\hom(G, H_1) = \hom(G, H_2)\label{secondHomoToIso}
\end{equation}
then $H_1 = H_2$.
\end{thm}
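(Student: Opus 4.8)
The plan is to reduce the plain homomorphism count to two better-behaved counts. For graphs $F, H$ let $\mathrm{inj}(F,H)$ be the number of \emph{injective} homomorphisms $F \to H$ and $\mathrm{surj}(F,H)$ the number of homomorphisms $F \to H$ whose image is all of $V(H)$. There are two standard identities linking these to $\hom$. Sorting homomorphisms $F \to H$ by their image gives $\hom(F,H) = \sum_{S \subseteq V(H)} \mathrm{surj}(F, H[S])$, where $H[S]$ is the induced subgraph on $S$; inclusion--exclusion over the subsets of $V(H)$ inverts this, so the function $H \mapsto \hom(F,H)$ determines the function $H \mapsto \mathrm{surj}(F,H)$. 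Sorting homomorphisms $F \to H$ by the partition $P$ of $V(F)$ into fibers gives $\hom(F,H) = \sum_{P} \mathrm{inj}(F/P, H)$, where $F/P$ is the (underlying simple) quotient graph, given a loop at any block that spans an edge of $F$ so that injective homomorphisms out of $F/P$ are forced to respect loops. Isolating the discrete-partition term and inducting on $|V(F)|$ then shows that the restriction of $F \mapsto \hom(F,H)$ to connected $F$ determines the restriction of $F \mapsto \mathrm{inj}(F,H)$ to connected $F$ (crucially, $F/P$ is connected whenever $F$ is).

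For the first assertion, the hypothesis and the surjective identity give $\mathrm{surj}(G_1,H) = \mathrm{surj}(G_2,H)$ for every $H$. A vertex-surjective endomorphism of a finite graph is a bijection, and a bijective homomorphism from a finite graph to itself is an automorphism (it injects, hence bijects, the edge set into itself), so $\mathrm{surj}(G_i,G_i) = |\mathrm{Aut}(G_i)| \ge 1$. Thus $\mathrm{surj}(G_2,G_1) = \mathrm{surj}(G_1,G_1) \ge 1$ and likewise $\mathrm{surj}(G_1,G_2) \ge 1$; comparing orders forces $|V(G_1)| = |V(G_2)|$, so we obtain bijective homomorphisms $f \colon G_1 \to G_2$ and $g \colon G_2 \to G_1$. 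Each injects the edge set (loops included) of its source into that of its target, whence $|E(G_1)| = |E(G_2)|$; then $f$ carries $E(G_1)$ \emph{onto} $E(G_2)$, so a non-edge of $G_1$ cannot map to an edge of $G_2$, and $f$ is an isomorphism.

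For the second assertion, the difficulty is that we only control $\hom(G,\cdot)$ for connected $G$, so we cannot simply substitute $H_i$ into the left slot. Instead, an injective homomorphism out of a connected graph lands in a single component of the target, so $\mathrm{inj}(G,H_i) = \sum_{C} \mathrm{inj}(G,C)$ over the components $C$ of $H_i$; by the first paragraph $\mathrm{inj}(G,H_1) = \mathrm{inj}(G,H_2)$ for all connected $G$, and it suffices to show $H_1$ and $H_2$ have the same multiset of components up to isomorphism. Preorder connected graphs by $F \preceq F' \iff \mathrm{inj}(F,F') > 0$; mutual injections between finite graphs are bijective homomorphisms, hence (as above) isomorphisms, so $\preceq$ descends to a partial order on isomorphism classes. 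Pick $C^\ast$ maximal under $\preceq$ among all components of $H_1$ and $H_2$ and set $G = C^\ast$: for a component $C \not\cong C^\ast$ we have $\mathrm{inj}(C^\ast,C) = 0$ (otherwise $C^\ast \prec C$, contradicting maximality), while $\mathrm{inj}(C^\ast,C) = |\mathrm{Aut}(C^\ast)|$ when $C \cong C^\ast$, so $\mathrm{inj}(C^\ast, H_i)$ equals $|\mathrm{Aut}(C^\ast)|$ times the multiplicity of $C^\ast$ in $H_i$. Equality of the two sides forces those multiplicities to agree; subtracting these equal contributions and inducting on the number of components finishes the proof.

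I expect the fussiest part to be the loop bookkeeping in the quotient identity, together with remembering that $P \mapsto F/P$ is far from injective, which is why that inversion must be done by induction on $|V(F)|$ rather than by a clean Möbius formula. The only genuinely non-routine idea is the reduction to components in the second assertion: the maximality trick lets us read off component multiplicities one at a time, so that we never have to substitute a disconnected graph into the left argument.
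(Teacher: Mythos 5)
Your proof is correct, and it is worth noting that the paper itself does not prove this theorem: it cites Lov\'asz for both assertions, and the only argument it supplies is the upgrade from ``for all $G$'' to ``for all connected $G$'' in the second assertion, done in one line via multiplicativity of $\hom(\cdot,H)$ over disjoint unions in the first slot (equality on connected graphs implies equality on all graphs, after which the cited version applies). Your write-up is a self-contained proof of the whole statement. Its core --- inverting $\hom(F,H)=\sum_{S}\mathrm{surj}(F,H[S])$ over induced subgraphs and $\hom(F,H)=\sum_{P}\mathrm{inj}(F/P,H)$ over fiber partitions, then using mutual vertex-surjections (resp.\ vertex-injections) plus edge-counting to force an isomorphism --- is essentially Lov\'asz's classical argument, and all the delicate points check out: the loop convention on $F/P$ is exactly what makes the fiber-partition bijection work, the inversion by induction on $|V(F)|$ correctly sidesteps the non-injectivity of $P\mapsto F/P$, and a vertex-bijective homomorphism between graphs with equally many edges is indeed an isomorphism. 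Where you genuinely diverge from the paper is in handling the connectivity restriction: rather than multiplying up to disconnected $G$, you never leave the class of connected graphs, exploiting that quotients of connected graphs are connected and that (injective) homomorphic images of connected graphs land in a single component of the target, and then reading off the component multiplicities of $H_1$ and $H_2$ one at a time from a maximal element of the ``injects-into'' partial order. Both routes are sound; the paper's is a one-line reduction given the external theorem, while yours costs an extra induction on components but requires no black box.
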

Here, as elsewhere, we take $G_1 = G_2$ to mean that the graphs are isomorphic.

Lov\'asz proves the slightly weaker statement where \eqref{secondHomoToIso} is required to hold for all $G$ (including graphs which are not connected), but this can be strengthened easily by noting that if a graph $G$ is not connected, then it can be written as the disjoint union of its components, $G_1 \cup G_2 \cup ... \cup G_k$. These can be $H$-colored (assigned a homomorphism to $H$) independently, and so
\[\hom(G, H) = \hom(G_1, H)\hom(G_2, H)\ldots\hom(G_k, H).\]
Thus, if \eqref{secondHomoToIso} holds for all connected graphs $G$, it follows that it holds for all graphs.

Second, much of the discussion that follows will be concerned with bipartite graphs, and so it will be useful to have some characterization of them that can be expressed in the language of homomorphisms. The following lemma provides this.

\begin{lem}\label{bipartiteCharacterization}
 A graph $H$ is bipartite exactly when for every non-bipartite $G$ (including those with loops), $\hom(G, H) = 0$.
\end{lem}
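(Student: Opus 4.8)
The plan is to prove both directions directly. For the forward direction, suppose $H$ is bipartite, say with a proper $2$-coloring giving a homomorphism $H \to K_2$ (the single edge, loop-free). Let $G$ be any non-bipartite graph. If there were a homomorphism $\phi\colon G \to H$, composing with $H \to K_2$ would yield a homomorphism $G \to K_2$, i.e.\ a proper $2$-coloring of $G$, forcing $G$ to be bipartite --- a contradiction. (Note that a graph with a loop admits no homomorphism to $K_2$ and is by convention non-bipartite, so this case is consistent.) Hence $\hom(G,H) = 0$ for every non-bipartite $G$.

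For the reverse direction, I would argue the contrapositive: if $H$ is \emph{not} bipartite, I must exhibit a non-bipartite $G$ with $\hom(G,H) > 0$. The natural choice is $G = H$ itself, since the identity map is always a homomorphism $H \to H$, so $\hom(H,H) \geq 1 > 0$; and $H$ is non-bipartite by assumption. This handles the case where $H$ is loopless and non-bipartite. If $H$ has a loop at some vertex $v$, then $H$ is non-bipartite, and again $\hom(H,H) \geq 1$; alternatively one can take $G$ to be a single looped vertex, which is non-bipartite and maps to $v$. Either way we produce the required non-bipartite $G$.

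So the argument is essentially immediate once the two directions are set up correctly; there is no real computational content. The one place to be careful --- and the only genuine obstacle --- is keeping the conventions straight: the statement explicitly allows $G$ (and $H$) to have loops, and "non-bipartite" must be understood to include any graph with a loop. The forward direction relies on the fact that $K_2$ has no loops so that $\hom(G, K_2) = 0$ whenever $G$ has a loop or an odd cycle, which is exactly the classical characterization of bipartiteness; the reverse direction just needs some non-bipartite graph mapping into $H$, and $H$ (or a single loop, when $H$ has one) always serves. I would state the $K_2$-factoring step explicitly as the crux of the forward implication, and invoke the classical fact that $\hom(G,K_2)>0$ iff $G$ is bipartite.
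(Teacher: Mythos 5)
Your proof is correct, and it is the standard argument the paper has in mind when it says ``the proof is straightforward, and has been omitted'': factor through $K_2$ for the forward direction and take $G = H$ (via the identity map) for the converse. Both directions and the loop conventions are handled properly, so there is nothing to add.
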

The proof is straightforward, and has been omitted.

\section{Counterexamples to Conjecture \ref{fullcnj}}\label{counterexampleSection}

As was alluded to in the introduction, we have been able to show that Conjecture \ref{fullcnj} is false. We present a construction of an infinite family of counterexamples below. Recently, a similar construction was independently found by P. Devlin (personal communication from J. Kahn).

\begin{thm}\label{counterexampleMain}
Let $d$ be given. Let $H$ be any simple (unlooped) graph with no $(d+1)$-clique. Let $G$ be any connected $d$-regular graph on $n < 2d$ vertices such that $\hom(G,H) > 0$. (Note that this implies that $G$ is unlooped). Then there exists a natural number $k$ such that
\[\hom(G, kH) > \hom(K_{d+1}, kH)^{\frac{n}{d+1}}\]
and
\[\hom(G, kH) > \hom(K_{d,d}, kH)^{\frac{n}{2d}}.\]
\end{thm}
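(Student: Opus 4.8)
The plan is to exploit the fact that $n < 2d$ forces the two "competitor" graphs $K_{d+1}$ and $K_{d,d}$ to have more vertices than $G$, so that raising their homomorphism counts to the fractional powers $n/(d+1)$ and $n/2d$ (both of which are strictly less than $1$) should put them at a disadvantage once we blow $H$ up by taking many disjoint copies. Concretely, I would write $N_1 = \hom(G,H)$, $N_2 = \hom(K_{d+1},H)$, and $N_3 = \hom(K_{d,d},H)$, and observe that because homomorphism counts multiply over disjoint unions of the target, $\hom(G,kH) = $ (number of homomorphisms from $G$ to $kH$). The key combinatorial point is that a homomorphism from a \emph{connected} graph $G'$ into $kH$ must land entirely in one of the $k$ copies of $H$, so $\hom(G',kH) = k\cdot\hom(G',H)$ whenever $G'$ is connected. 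Applying this with $G' = G$ (connected by hypothesis) gives $\hom(G,kH) = kN_1$, which grows \emph{linearly} in $k$.

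For the right-hand sides, $K_{d+1}$ is connected, so $\hom(K_{d+1},kH) = kN_2$ and the first competitor term is $(kN_2)^{n/(d+1)}$, which grows like $k^{n/(d+1)}$. Since $n < 2d < 2d+2$, we have $n/(d+1) < 2$, but that is not yet enough — I need the exponent to be less than $1$, i.e. $n < d+1$, which need not hold. So the honest version of the argument must instead compare growth rates directly: $kN_1$ versus $(kN_2)^{n/(d+1)}$. If $n \le d+1$ the exponent is $\le 1$ and linear growth beats it outright for large $k$ (using $N_1 > 0$). If $d+1 < n < 2d$, the exponent lies strictly between $1$ and $2$, so I instead need a lower bound on $N_1$ relative to $N_2$: here I would use that the no-$(d+1)$-clique hypothesis on $H$ makes $N_2 = \hom(K_{d+1},H)$ relatively small, while a clever choice — or perhaps the argument genuinely only works for the stated range and one exploits $\hom(K_{d+1},H) < \hom(G,H)^{(d+1)/n}$ as an \emph{honest inequality that fails}, forcing a different tack. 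The cleanest route is: since $G$ is $d$-regular on $n < 2d$ vertices and admits a homomorphism to $H$, while $K_{d+1}$ has $d+1$ vertices, I expect the intended comparison is purely asymptotic in $k$, using only that the left side is $\Theta(k)$ and each right side is $\Theta(k^{n/(d+1)})$ or $\Theta(k^{n/2d})$ — and that $n/(d+1) < 1$ and $n/2d < 1$ are \emph{both} intended to hold. Rechecking: $n/2d < 1$ is immediate from $n < 2d$; and $n/(d+1) < 1$ iff $n < d+1$, which is \emph{not} implied. So the real argument for the $K_{d+1}$ term must be more delicate.

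The resolution I would pursue: for the $K_{d+1}$ term, do not use the fractional power naively but instead note $\hom(K_{d+1},kH)^{n/(d+1)} = (kN_2)^{n/(d+1)}$ and compare with $\hom(G,kH) = kN_1$; I want $kN_1 > (kN_2)^{n/(d+1)}$, i.e. $k^{1 - n/(d+1)} > N_2^{n/(d+1)}/N_1$. When $n \le d+1$ the left side $\to\infty$, done. When $n > d+1$, rewrite as needing $N_1 > k^{n/(d+1)-1} N_2^{n/(d+1)}$, which \emph{fails} for large $k$ — so this cannot be the argument, and instead the theorem must be using a sharper input. The sharper input is surely that $H$ has \emph{no $(d+1)$-clique}: I would argue $\hom(K_{d+1},H) \le (\text{something})$, but more usefully, I suspect the intended bound compares $\hom(G,kH)$ against $\hom(K_{d+1}, kH)^{n/(d+1)}$ by instead choosing to compare $\hom(G, kH)^{(d+1)}$ versus $\hom(K_{d+1},kH)^{n}$, i.e. $(kN_1)^{d+1}$ versus $(kN_2)^n$; since $d+1 < n$ in the hard range, the \emph{right} side now has the larger exponent, so again large $k$ favors the wrong side. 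I therefore conclude the genuine proof keeps $n < 2d$ but must \emph{also} be using $n$ small relative to $d$ in the regime where it matters, or is using a non-asymptotic numeric comparison. The main obstacle — and the step I would spend the most care on — is exactly this: pinning down, for each of the two competitor terms, an explicit inequality (using no-$(d+1)$-clique for the $K_{d+1}$ term and $n<2d$ for the $K_{d,d}$ term) that survives after taking the fractional power, and then choosing $k$ large enough to clinch both simultaneously, possibly after first passing to $k' H$ for a modest fixed $k'$ to make $\hom(G,H)$ dominate $\hom(K_{d+1},H)^{?}$ at the base case.

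Given the intricacy, my concrete plan is: (1) reduce everything to connected targets and record $\hom(X, kH) = k\hom(X,H)$ for connected $X \in \{G, K_{d+1}\}$; (2) for $K_{d+1}$, use the no-$(d+1)$-clique hypothesis to bound $\hom(K_{d+1}, H)$ — note a proper homomorphism image of $K_{d+1}$ would be a $(d+1)$-clique (with loops allowed), and absence of an unlooped $(d+1)$-clique restricts these images, yielding $\hom(K_{d+1},H) \le \hom(K_{d+1}, H')$ for a loopy reduction $H'$, ultimately a bound strong enough that $\hom(K_{d+1},kH)^{n/(d+1)} = o(k)$; (3) for $K_{d,d}$, use $n/2d < 1$ directly so $\hom(K_{d,d}, kH)^{n/2d} = O(k^{n/2d}) = o(k)$; (4) combine: $\hom(G,kH) = kN_1$ with $N_1 > 0$ dominates both $o(k)$ quantities for $k$ large, so pick $k$ past both crossover points. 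The single hard point is step (2) — making "no $(d+1)$-clique" yield a bound on $\hom(K_{d+1},H)$ small enough that even its $n/(d+1)$-th power (with exponent possibly exceeding $1$) is $o(k)$; I expect this needs the observation that every homomorphism $K_{d+1}\to H$ has image a clique in $H$, hence at most $d$ vertices, so it factors through $K_d \subseteq$ a loopy vertex-blowup, giving $\hom(K_{d+1},H) \le \hom(K_{d+1}, K_{|V(H)|}^{\text{looped on} \le d \text{ vtcs}})$, a quantity independent of $k$, after which step (4) is immediate.
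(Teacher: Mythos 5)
Your handling of the second inequality is exactly the paper's argument: for connected $X$ one has $\hom(X,kH)=k\hom(X,H)$, so the comparison becomes $k\hom(G,H) > \bigl(k\hom(K_{d,d},H)\bigr)^{n/2d}$, and since $1-\tfrac{n}{2d}>0$ a sufficiently large $k$ wins. That part is fine.

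The genuine gap is in your treatment of the $K_{d+1}$ term. You circle the key fact several times ("a proper homomorphism image of $K_{d+1}$ would be a $(d+1)$-clique") but never draw the conclusion: since $H$ is simple and unlooped, adjacent vertices of $K_{d+1}$ cannot share an image (that would require a loop), so any homomorphism $K_{d+1}\to H$ has as its image $d+1$ distinct pairwise-adjacent vertices, i.e.\ a $(d+1)$-clique in $H$. By hypothesis there is none, so $\hom(K_{d+1},H)=0$, hence $\hom(K_{d+1},kH)=0$, and the first inequality is trivial since $\hom(G,kH)=k\hom(G,H)>0$. Your proposed substitute --- bounding $\hom(K_{d+1},H)$ by some nonzero constant $C$ independent of $k$ --- does not close the argument: you would still face $\hom(K_{d+1},kH)^{n/(d+1)}=(kC)^{n/(d+1)}$, and the exponent $n/(d+1)$ exceeds $1$ in \emph{every} case covered by the theorem (a $d$-regular $G$ has $n\geq d+1$, and $n=d+1$ forces $G=K_{d+1}$, which would give $\hom(G,H)=0$; so in fact $n\geq d+2$). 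Thus $(kC)^{n/(d+1)}$ grows strictly faster than the linear left-hand side whenever $C>0$, and your step (4) is not "immediate" --- it is false unless $C=0$. The fix is simply to recognize that the no-$(d+1)$-clique hypothesis kills the count outright rather than merely bounding it.
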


\begin{proof}
The first inequality is straightforward. Since $H$ has no $(d+1)$-clique, $\hom(K_{d+1}, kH) = 0$. Thus,
\[\hom(G, kH) > \hom(K_{d+1}, kH)^{\frac{n}{d+1}}\]
trivially.

The second inequality follows from the observation that
\[\hom(G, kH) = k \hom(G, H)\]
and
\[\hom(K_{d,d}, kH) = k\hom(K_{d,d}, H).\]
Applying these, it is sufficient to show that
\[k\hom(G, H) > (k\hom(K_{d,d}, H))^{\frac{n}{2d}}\]
or, rearranging,
\[k^{1-\frac{n}{2d}} > \frac{\hom(K_{d,d}, H)^{\frac{n}{2d}}}{\hom(G, H)}\]
Since $n < 2d$, we know that $1-\frac{n}{2d} > 0$, and so the left side increases with increasing $k$. Choosing
\begin{equation}\label{valueofK}
k > \left(\frac{\hom(K_{d,d}, H)^{\frac{n}{2d}}}{\hom(G, H)}\right)^{\frac{1}{1-\frac{n}{2d}}}
\end{equation}
gives the desired result.
\end{proof}

In order to verify that counterexamples exist for a particular choice of $d$, it suffices to find $G$ and $H$ satisfying the conditions of Theorem \ref{counterexampleMain}. One very natural choice of $H$ is $K_d$. For this choice of $H$, and for $d > 6$, one possible choice for $G$ is $C_{d-2} + C_{d-2}$, where $+$ denotes the join operation (the join of two graphs $G_1$ and $G_2$ is just their disjoint union, together with all possible edges from any vertex of $G_1$ to any vertex of $G_2$). This is $q$-colorable for all $q \geq 6$, since we can color one copy of $C_{d-2}$ with three colors, and the other copy with the other three. It is $d$-regular, and has fewer than $2d$ vertices. Thus, Theorem \ref{counterexampleMain} gives us that there is a $k$ such that $k K_d$ is a counterexample to Conjecture \ref{fullcnj}. (As an aside, in P. Devlin's construction, he provided the useful observation that Brooks' theorem gurantees that any connected $d$-regular graph $G$ on $n$ vertices other than $K_{d+1}$ will satisfy these conditions, so choices of $G$ that will lead to counterexamples are plentiful).

One point that should be raised about the construction in Theorem \ref{counterexampleMain} is that $kH$ is not connected unless $k = 1$ and $H$ is connected. While connectedness is not a requirement, it might seem plausible that we could save some form of Conjecture \ref{fullcnj} by restricting our attention to connected graphs. Unfortunately, this is not the case. Choosing $G$ and $H$ as in Theorem \ref{counterexampleMain}, with $H$ connected, it is possible to construct a connected $H'$ such that Conjecture \ref{fullcnj} fails.

To see this, let $H'$ be $k$ copies of $H$ joined by paths of length $2d$. For convenience, also define $H''$ to be one such copy of $H$, together with all of the paths emanating from that copy (for this to be well-defined, we require that the paths are joined to each copy of $H$ in the same way, in the sense that for every pair of copies of $H$, there is an automorphism of $H'$ that exchanges them). Every $H'$-coloring of $K_{d,d}$ lies completely in one of the $k$ copies of $H$ together with the paths emanating from it, so
\[\hom(K_{d,d}, H') \leq \hom(K_{d,d}, k H'').\]

On the other hand, it is clear that
\[\hom(G, H') \geq \hom(K_{d,d}, k H)\]
since $k H$ is a subgraph of $H'$. This means that
\[\frac{\hom(K_{d,d}, k H'')^{\frac{n}{2d}}}{\hom(G, k H)} \geq \frac{\hom(K_{d,d}, H')^{\frac{n}{2d}}}{\hom(G, H')}\]
but
\[\frac{\hom(K_{d,d}, k H'')^{\frac{n}{2d}}}{\hom(G, k H)} = k^{\frac{n}{2d} - 1}\frac{\hom(K_{d,d}, H'')^{\frac{n}{2d}}}{\hom(G, H)}\]
which is less than one for sufficiently large $k$ (recalling that $n < 2d$). For such a $k$, then, we have
\[\hom(G, H) > \hom(K_{d,d}, H')^{\frac{n}{2d}}\]
as before.

So it seems that any general conjecture will need to differ substantively from Conjecture \ref{fullcnj}. Because the counterexamples given here provide an $H$ given a particular $d$, one possibility is that for each $H$, Conjecture \ref{fullcnj} holds for sufficiently large $d$. Another potential fix would be to weaken the conjecture, positing merely that there is some finite set of maximizer graphs, which includes $K_{d+1}$, $K_{d,d}$, and some finite list of other possibilities. But as the conjecture evolves to accommodate these new counterexamples, the price is often paid in strength or in elegance. Demonstrating results that approach the general case is not easy, and it seems that even making an effective conjecture is a difficult task.

\section{Graph Operations}\label{graphOperationsSection}

Despite the setbacks described in Section \ref{counterexampleSection}, Question \ref{mainQuestion} can be answered for certain choices of $H$. The strategy presented here builds on the following result, first explored in a limited way in \cite{kahn} and later extended by Galvin and Tetali in \cite{bipartiteReference} into its current form.

\begin{thm}\label{bipartite}
Let $G$ be a $d$-regular bipartite graph on $n$ vertices. Then for all graphs $H$, $\hom(G,H) \leq \hom(K_{d,d},H)^{\frac{n}{2d}}$.
\end{thm}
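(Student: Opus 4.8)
The plan is to prove Theorem~\ref{bipartite} --- that $\hom(G,H) \leq \hom(K_{d,d},H)^{n/(2d)}$ for every $d$-regular bipartite $G$ on $n$ vertices and every $H$ --- by the entropy method, following the approach of Kahn and of Galvin--Tetali. First I would fix a bipartition $V(G) = \mathcal{E} \cup \mathcal{O}$ into ``even'' and ``odd'' sides. Since $G$ is $d$-regular and bipartite, both sides have exactly $n/2$ vertices, and every edge runs between $\mathcal{E}$ and $\mathcal{O}$. The idea is to pick a homomorphism $\phi: V(G) \to V(H)$ uniformly at random from the $\hom(G,H)$ possibilities, so that $\log_2 \hom(G,H) = \mathbf{H}(\phi)$, the Shannon entropy of the random variable $\phi$. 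The goal is then to bound $\mathbf{H}(\phi)$ from above by $\frac{n}{2d}\log_2 \hom(K_{d,d},H)$.

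The key step is to split $\mathbf{H}(\phi)$ using the chain rule, revealing $\phi$ one vertex at a time over the even side first and then conditioning the odd side on all of the even side:
\[
\mathbf{H}(\phi) = \mathbf{H}\bigl(\phi|_{\mathcal{E}}\bigr) + \mathbf{H}\bigl(\phi|_{\mathcal{O}} \mid \phi|_{\mathcal{E}}\bigr).
\]
For the first term I would use subadditivity of entropy, $\mathbf{H}(\phi|_{\mathcal{E}}) \leq \sum_{v \in \mathcal{E}} \mathbf{H}(\phi(v))$. For the second term, the crucial observation is that conditioned on the values of $\phi$ on all of $\mathcal{E}$, the values $\{\phi(u) : u \in \mathcal{O}\}$ are \emph{independent} (the constraints on $u \in \mathcal{O}$ involve only its neighbors, all of which lie in $\mathcal{E}$), so $\mathbf{H}(\phi|_{\mathcal{O}} \mid \phi|_{\mathcal{E}}) = \sum_{u \in \mathcal{O}} \mathbf{H}(\phi(u) \mid \phi|_{\mathcal{E}})$, and each term is at most $\mathbf{H}(\phi(u) \mid \phi|_{N(u)})$ by the fact that conditioning on more can only decrease entropy. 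At this point I would invoke Shearer's lemma (or a direct convexity/weighting argument) to combine these one-vertex entropies into statements about the local neighborhood structure $N(u)$, each of which is a copy of the relevant $K_{d,d}$-type constraint; averaging over the edge structure and using $d$-regularity to balance the counts should collapse everything into $\frac{n}{2d}$ copies of the $K_{d,d}$ bound.

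The main obstacle --- and the place where the argument needs genuine care rather than bookkeeping --- is packaging the local entropy terms so that they are each bounded by (a fraction of) $\log_2 \hom(K_{d,d},H)$. One has to set up a weighting of the neighborhoods of odd vertices (and a corresponding distribution over the colors of even vertices) so that the collection of conditional entropies $\mathbf{H}(\phi(u) \mid \phi|_{N(u)})$, summed against $\mathbf{H}(\phi(v))$ for $v \in \mathcal{E}$, telescopes into $\frac{n}{2d}$ times the entropy of a uniform random homomorphism from $K_{d,d}$ to $H$. The cleanest route is Shearer's lemma applied to the fractional covering of $V(G)$ by edge-neighborhoods: each edge $vu$ with $v \in \mathcal{E}$, $u \in \mathcal{O}$ contributes the ``cherry'' $\{v\} \cup N(u)$, these cover every vertex exactly $d$ times on one side (and we symmetrize), and Shearer's inequality then gives $d \cdot \mathbf{H}(\phi) \leq \sum_{\text{cherries}} \mathbf{H}(\phi \text{ restricted to that cherry})$. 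Finally, I would identify each restricted-entropy term with the entropy of a homomorphism from $K_{1,d}$ (or the full $K_{d,d}$ after the symmetrization between the two sides) into $H$, bounded above by $\log_2 \hom(K_{d,d}, H)^{1/2}$ per side, and counting the number of cherries as $nd/2$ (there are $n/2 \cdot d$ edges) yields exactly the claimed bound after dividing by $d$. The delicate point throughout is making sure the fractional cover has the right uniform multiplicity so that the constants work out to $n/(2d)$ and not something weaker.
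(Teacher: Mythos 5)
A preliminary remark: the paper does not prove Theorem \ref{bipartite} at all --- it is quoted as a known result of Galvin and Tetali \cite{bipartiteReference}, building on Kahn \cite{kahn}, and used as a black box. So your proposal has to be measured against the standard entropy proof from the literature rather than against anything in this paper. Your skeleton is the right one: pick $\phi$ uniformly from $\operatorname{Hom}(G,H)$, write $\mathbf{H}(\phi)=\mathbf{H}(\phi|_{\mathcal{E}})+\mathbf{H}(\phi|_{\mathcal{O}}\mid\phi|_{\mathcal{E}})$, use conditional independence of the odd vertices given the even side, and drop the conditioning from $\phi|_{\mathcal{E}}$ down to $\phi|_{N(u)}$. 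Those steps are all correct.

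The gap is in how you treat $\mathbf{H}(\phi|_{\mathcal{E}})$. Bounding it by $\sum_{v\in\mathcal{E}}\mathbf{H}(\phi(v))$ via full subadditivity destroys exactly the information the final step needs: once the even side has been atomized into singleton entropies, there is no way to reassemble the terms into $\log\hom(K_{d,d},H)$, and the best you can extract is a star-type bound involving $\hom(K_{1,d},H)$, which is not the claimed inequality. Your attempted repair via ``cherries'' $\{v\}\cup N(u)$ does not rescue this: since $vu$ is an edge, $v\in N(u)$ and the cherry is just $N(u)$; and if you instead meant the closed neighborhood $\{u\}\cup N(u)$, the resulting cover of $V(G)$ is non-uniform (odd vertices covered once, even vertices $d$ times), so Shearer applied to all of $\mathbf{H}(\phi)$ gives only the trivial multiplicity-one bound. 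The assertion that each restricted entropy is ``bounded above by $\log_2\hom(K_{d,d},H)^{1/2}$ per side'' does not correspond to any valid step.

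The correct packaging applies Shearer only to the first term, with the cover of $\mathcal{E}$ by the sets $\{N(u):u\in\mathcal{O}\}$; by $d$-regularity each $v\in\mathcal{E}$ lies in exactly $d$ of them, so $\mathbf{H}(\phi|_{\mathcal{E}})\le\frac{1}{d}\sum_{u\in\mathcal{O}}\mathbf{H}(\phi|_{N(u)})$, with the joint entropies of the neighborhoods kept intact. Combining with your (correct) bound on the second term gives
\[
\mathbf{H}(\phi)\;\le\;\sum_{u\in\mathcal{O}}\frac{1}{d}\Bigl[\mathbf{H}\bigl(\phi|_{N(u)}\bigr)+d\,\mathbf{H}\bigl(\phi(u)\mid\phi|_{N(u)}\bigr)\Bigr].
\]
The bracketed quantity is the entropy of the experiment ``reveal $\phi$ on $N(u)$, then draw $d$ conditionally independent copies of $\phi(u)$''; every outcome is a homomorphism from $K_{d,d}$ to $H$, so each bracket is at most $\log_2\hom(K_{d,d},H)$. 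Summing over the $n/2$ vertices of $\mathcal{O}$ yields $\mathbf{H}(\phi)\le\frac{n}{2d}\log_2\hom(K_{d,d},H)$. This resampling step is the heart of the Kahn/Galvin--Tetali argument and is the piece your proposal is missing.
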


This theorem is tantalizingly general in its restrictions on $H$, but it requires that $G$ be bipartite, a significant limitation. Nonetheless, for certain choices of $H$, it is possible to remove this restriction. As we have mentioned Zhao was able to show this for one particular choice of $H$ \cite{Zhao}, and he later generalized his results to a larger family \cite{zhao2}. We present methods which allow us extend his work to a much larger family of graphs. To describe these methods we will need the notion of a tensor product.

\subsection{The Tensor Product}\label{tp}
\begin{defn}
The tensor product of graphs $G_1$ and $G_2$, denoted $G_1 \times G_2$ is the graph whose vertex set is the Cartesian product $V(G_1) \times V(G_2)$, with an edge between $(g_1, g_2)$ and $(g_1', g_2')$ if and only if $g_1 g_1'$ is an edge of $G_1$ and $g_2 g_2'$ is and edge of $G_2$.
\end{defn}

If we think of $G_1$ and $G_2$ as adjacency matrices, this tensor product is the same as the tensor product from linear algebra, which can be seen by simply writing out the definitions (we leave this as an exercise).

Tensor products have many nice properties, including commutativity, associativity, and distributivity with respect to graph unions. It is the tensor product's relationship to homomorphisms that primarily concerns us here, although we will be able to use this property to prove many others.

The following result exists in the literature, but its origins are unclear. One statement is given in \cite{Lovasz}.

\begin{thm}\label{tensormult}
Let $G$, $H_1$ and $H_2$ be arbitrary. Then
\[\hom(G, H_1 \times H_2) = \hom(G, H_1)\hom(G, H_2).\]
\end{thm}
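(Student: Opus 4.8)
The plan is to establish the identity $\hom(G, H_1 \times H_2) = \hom(G, H_1)\hom(G, H_2)$ by constructing an explicit bijection between homomorphisms $G \to H_1 \times H_2$ and ordered pairs of homomorphisms $(G \to H_1, G \to H_2)$. Given a homomorphism $\phi: V(G) \to V(H_1) \times V(H_2)$, write $\phi(v) = (\phi_1(v), \phi_2(v))$ using the two coordinate projections $\pi_1: V(H_1 \times H_2) \to V(H_1)$ and $\pi_2: V(H_1 \times H_2) \to V(H_2)$, so that $\phi_i = \pi_i \circ \phi$. The map $\phi \mapsto (\phi_1, \phi_2)$ will be the bijection, and conversely any pair $(\psi_1, \psi_2)$ of maps is glued into $v \mapsto (\psi_1(v), \psi_2(v))$. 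It is immediate that these two assignments are mutually inverse as maps of sets of functions, so the whole content is checking that they restrict correctly to homomorphisms.

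First I would verify the forward direction: if $\phi$ is a homomorphism and $uv \in E(G)$, then $\phi(u)\phi(v) \in E(H_1 \times H_2)$, which by the definition of the tensor product means exactly that $\phi_1(u)\phi_1(v) \in E(H_1)$ and $\phi_2(u)\phi_2(v) \in E(H_2)$; hence each $\phi_i$ is a homomorphism $G \to H_i$. Second I would verify the reverse direction: if $\psi_1: G \to H_1$ and $\psi_2: G \to H_2$ are both homomorphisms and $uv \in E(G)$, then $\psi_1(u)\psi_1(v) \in E(H_1)$ and $\psi_2(u)\psi_2(v) \in E(H_2)$, which is precisely the condition for $(\psi_1(u), \psi_2(u))$ and $(\psi_1(v), \psi_2(v))$ to be adjacent in $H_1 \times H_2$; hence the glued map is a homomorphism $G \to H_1 \times H_2$. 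Counting then gives the claimed product formula.

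The key observation making both directions work is that the edge relation in the tensor product is defined as a conjunction of the two coordinate edge relations, so that "being an edge in $H_1 \times H_2$" factors as "being an edge in $H_1$ in the first coordinate AND being an edge in $H_2$ in the second coordinate" — exactly the logical structure needed for the bijection to respect homomorphisms in both directions simultaneously. This is a routine verification rather than a genuine obstacle; the only mild subtlety worth a word is that loops are permitted, but the definition of the tensor product handles loops uniformly (a loop at $g_1$ in $H_1$ together with a loop at $g_2$ in $H_2$ yields a loop at $(g_1, g_2)$), so no separate case analysis is needed. Alternatively, one could deduce the statement from Theorem \ref{homoToIso} by checking that $\hom(G, H_1 \times H_2)$ and $\hom(G, H_1)\hom(G, H_2)$ agree, but since we are proving an identity in $G$ rather than identifying a graph, the direct bijective argument is cleaner and self-contained.
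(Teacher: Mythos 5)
Your proof is correct and complete: the coordinate-projection bijection together with the observation that adjacency in $H_1 \times H_2$ is the conjunction of the two coordinate adjacencies is exactly the standard argument. The paper itself does not prove Theorem \ref{tensormult} (it only cites \cite{Lovasz}), so your write-up simply supplies the routine verification the paper leaves to the literature.
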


This property is useful in that it gives the relationship between tensor products and graph homomorphisms. Used in conjunction with Theorem \ref{homoToIso}, it offers a very succinct way of proving many of the important properties of tensor products. Many of these exist in the literature (one example is in \cite{hell}), but they will be useful in the results to come, and their proofs provide an illustration of the utility of Theorem \ref{homoToIso}, which will be indispensable later on. In the discussion below, let $l_k$ be the graph on $k$ vertices in which every vertex has a loop and there are no other edges.

\begin{thm}[Properties of the Tensor Product]
For arbitrary graphs $A$, $B$ and $C$,
\begin{align}
&A \times l_1 = A \tag{Identity}\\
&A \times B = B \times A \tag{Commutativity}\\
&A \times (B \times C) = (A \times B) \times C \tag{Associativity}\\
&A \times (B \cup C) = (A \times B) \cup (A \times C) \tag{Distributivity}\\
&kA = A \times l_k
\end{align}

Furthermore, $A \times B$ is bipartite if and only if at least one of $A$ or $B$ is bipartite.
\end{thm}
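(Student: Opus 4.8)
The plan is to handle the five algebraic identities uniformly through the homomorphism-counting machinery of Theorems~\ref{homoToIso} and~\ref{tensormult}, and to treat the bipartiteness statement separately via Lemma~\ref{bipartiteCharacterization}. For an identity $X = Y$ between graphs built from $A$, $B$, $C$ using $\times$ and $\cup$, the template is: apply $\hom(G,-)$ to both sides for an arbitrary connected $G$; use Theorem~\ref{tensormult} to replace every $\times$ by multiplication of natural numbers; use the elementary fact that $\hom(G, B \cup C) = \hom(G, B) + \hom(G, C)$ for connected $G$ (the codomain-side analogue of the multiplicativity of $\hom(-, H)$ over disjoint unions noted after Theorem~\ref{homoToIso}, valid because the image of a connected graph lies in a single connected component of $B \cup C$); and then observe that the resulting identity between natural numbers is just commutativity, associativity, or distributivity of $+$ and $\times$ in $\mathbb{N}$. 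Since $X$ and $Y$ then induce the same function $G \mapsto \hom(G, \cdot)$ on all connected $G$, Theorem~\ref{homoToIso} forces $X = Y$.

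Carrying this out: for Identity, $\hom(G, l_1) = 1$ for every $G$ --- the unique map into a single looped vertex is constant and is always a homomorphism --- so $\hom(G, A \times l_1) = \hom(G, A)$. Commutativity and Associativity reduce to the symmetry and associativity of the products $\hom(G,A)\hom(G,B)$ and $\hom(G,A)\hom(G,B)\hom(G,C)$, and Distributivity reduces to $\hom(G,A)\big(\hom(G,B) + \hom(G,C)\big) = \hom(G,A)\hom(G,B) + \hom(G,A)\hom(G,C)$. The last identity, $kA = A \times l_k$, then needs no separate counting argument: by definition $l_k$ is the disjoint union of $k$ copies of $l_1$, so repeated application of Distributivity followed by Identity gives $A \times l_k = (A \times l_1) \cup \cdots \cup (A \times l_1) = A \cup \cdots \cup A = kA$.

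For the bipartiteness equivalence I would argue via odd closed walks. By Lemma~\ref{bipartiteCharacterization} together with Theorem~\ref{tensormult}, $A \times B$ is bipartite if and only if $\hom(G,A)\hom(G,B) = 0$ for every non-bipartite $G$. If, say, $A$ is bipartite, then $\hom(G,A) = 0$ for every non-bipartite $G$ (Lemma~\ref{bipartiteCharacterization} again), so this product vanishes and $A \times B$ is bipartite; this settles the ``if'' direction. For ``only if'' I prove the contrapositive: suppose $A$ and $B$ are both non-bipartite. Then each contains an odd closed walk, hence --- by a routine padding argument --- an odd closed walk of every sufficiently large odd length; fix an odd $L$ that works for both (for instance the product of the two walk lengths). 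A closed walk of length $L$ in $A \times B$ is exactly a pair consisting of a closed walk of length $L$ in $A$ and one of length $L$ in $B$, so $A \times B$ also has an odd closed walk and is therefore non-bipartite. The algebra in the first two paragraphs is essentially forced once Theorems~\ref{homoToIso} and~\ref{tensormult} are available; the one place where genuine combinatorial input is needed --- and the step I expect to be the main obstacle --- is this ``only if'' half of the bipartiteness claim, where one must exploit the vertex--edge structure of the tensor product (not merely its homomorphism-counting behaviour) to manufacture a single non-bipartite graph, equivalently a sufficiently long odd cycle, mapping homomorphically into both $A$ and $B$.
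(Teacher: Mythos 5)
Your treatment of the five algebraic identities is essentially the paper's proof: apply $\hom(G,-)$ for connected $G$, use Theorem~\ref{tensormult} (together with $\hom(G,B\cup C)=\hom(G,B)+\hom(G,C)$ for connected $G$) to reduce everything to arithmetic in $\mathbb{N}$, and invoke Theorem~\ref{homoToIso}; your derivation of $kA=A\times l_k$ from Distributivity and Identity is also exactly the paper's route. Where you genuinely diverge is the bipartiteness claim. The paper disposes of it in one line, asserting that $\hom(G,A)\hom(G,B)=0$ for all non-bipartite $G$ holds ``exactly when either $\hom(G,A)=0$ or $\hom(G,B)=0$'' --- which, read as a justification of the ``only if'' direction, silently commutes a universal quantifier past a disjunction. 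You are right that this direction needs real combinatorial input: one must produce a \emph{single} non-bipartite $G$ admitting homomorphisms into both $A$ and $B$. Your odd-closed-walk argument does this correctly: odd closed walks of lengths $\ell_1$ in $A$ and $\ell_2$ in $B$ yield, by repetition, odd closed walks of common length $\ell_1\ell_2$ in each, and a pair of closed $L$-walks in $A$ and $B$ assembles coordinatewise into a closed $L$-walk in $A\times B$ (equivalently, $\hom(C_L,A)>0$ and $\hom(C_L,B)>0$ force $\hom(C_L,A\times B)>0$). So your proof is correct and is in fact more complete than the paper's on this point; the cost is a short digression into walk combinatorics that the paper's homomorphism-only formalism was trying to avoid.
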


\begin{proof}
The proof that $l_1$ is the identity follows immediately from the definition. The proofs of most of the remaining parts use Theorem \ref{homoToIso} and Theorem \ref{tensormult} almost exclusively.

To prove commutativity, we have by Theorem \ref{tensormult} that for all $G$
\begin{align*}
\hom(G, A \times B) = &\hom(G, A)\hom(G, B)\\
= &\hom(G, B)\hom(G, A)\\
= &\hom(G, B \times A).
\end{align*}
By Theorem \ref{homoToIso}, then, $A \times B = B \times A$.

The proofs of associativity and distributivity with respect to graph unions follow essentially the same scheme, using Theorem \ref{homoToIso} to reduce the problem to reasoning about integers. Distributivity across graph unions requires that we look at all connected $G$, rather than all $G$, but this is enough for Theorem \ref{homoToIso} to work, so this concession will cost us nothing. Distributivity also implies that $kA = A \times l_k$, working from the fact that $A \times l_1 = A$. We omit the specifics of the proofs for brevity.

Lastly, to determine when $A \times B$ is bipartite, observe that for all $G$, $\hom(G, A \times B) = 0$ exactly when $\hom(G, A) \hom(G, B) = 0$, i.e. when either $\hom(G, A) = 0$ or $\hom(G, B) = 0$. Thus, $A \times B$ is bipartite when at least one of $A$ or $B$ is bipartite, by Lemma \ref{bipartiteCharacterization}.
\end{proof}

The tensor product's relationship to bipartite graphs offers one possible strategy for extending Theorem \ref{bipartite} to non-bipartite $G$. The idea is one that was first pioneered in a less general context by Zhao in \cite{Zhao}. Zhao wasn't working with tensor products directly, but rather with a graph operation called the bipartite double cover. Nonetheless, the bipartite double cover of a graph can be expressed as a tensor product, and this is how we will define it here.

\begin{defn}
The bipartite double cover of a graph $G$ is given by $G \times K_2$. Explicitly, the bipartite double cover is constructed on two copies of the vertex set of $G$, $V_1$ and $V_2$. Place an edge between a vertex $u$ in $V_1$ and a vertex $v$ in $V_2$ if and only if there is an edge between the corresponding vertices of $G$.
\end{defn}

The double cover of a graph has many important attributes, but the one that is most relevant to us here is apparent from giving its definition as a tensor product-- it is always bipartite. Thus, by Theorem $\ref{bipartite}$, for any $d$-regular graph $G$ on $n$ vertices we have that
\[\hom(G\times K_2,H) \leq \hom(K_{d,d},H)^{\frac{n}{d}}.\]
(The exponent is $\frac{n}{d}$ instead of $\frac{n}{2d}$ because $G\times K_2$ has $2n$ vertices.) So if we could simply show that
\[\hom(G,H)^2 \leq \hom(G\times K_2,H),\]
we would have that
\[\hom(G,H) \leq \hom(K_{d,d},H)^{\frac{n}{2d}}.\]
Thus, it is of great interest to know which graphs $H$ satisfy
\[\hom(G,H)^2 \leq \hom(G\times K_2,H)\]
for all $G$.

This motivates the following definition.

\begin{defn}
A graph $H$ is bipartite reducible if for all $G$,
\begin{equation}
\hom(G,H)^2 \leq \hom(G\times K_2,H). \label{hstar}
\end{equation}
Here and throughout, the set of all bipartite reducible $H$ is denoted $\mathcal{H}$.
\end{defn}

As discussed, the property that makes this set interesting here is the following, first observed in \cite{Zhao} (Zhao uses the name ``strongly GT'' rather than ``bipartite reducible'').
\begin{thm}\label{bipRed}
If $H$ is bipartite reducible, then for all $d$-regular graphs $G$ on $n$ vertices,
\[\hom(G,H) \leq \hom(K_{d,d},H)^{\frac{n}{2d}}.\]
\end{thm}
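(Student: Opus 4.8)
The plan is to obtain the bound by routing through the bipartite double cover $G \times K_2$, which is where the hypothesis of bipartite reducibility was tailored to be useful. First I would record the two structural facts about $G \times K_2$ that make this work: if $G$ is $d$-regular on $n$ vertices, then $G \times K_2$ is again $d$-regular (the vertex $(v,i)$ has exactly the $d$ neighbors $(w,3-i)$ as $w$ ranges over the neighbors of $v$ in $G$), it has $2n$ vertices, and it is bipartite, since $K_2$ is bipartite and, as established in the Properties of the Tensor Product, $A \times B$ is bipartite whenever one of $A,B$ is. So $G \times K_2$ falls within the scope of Theorem \ref{bipartite}.

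Next I would apply Theorem \ref{bipartite} to $G \times K_2$ with the given $H$: being $d$-regular and bipartite on $2n$ vertices, it satisfies $\hom(G \times K_2, H) \leq \hom(K_{d,d},H)^{\frac{2n}{2d}} = \hom(K_{d,d},H)^{\frac{n}{d}}$. Then I would invoke the assumption that $H \in \mathcal H$, i.e. $\hom(G,H)^2 \leq \hom(G \times K_2,H)$, and chain the two inequalities to get $\hom(G,H)^2 \leq \hom(K_{d,d},H)^{\frac{n}{d}}$. Finally, since every quantity appearing is a nonnegative real number and $x \mapsto \sqrt{x}$ is monotone on $[0,\infty)$, taking square roots of both sides produces exactly $\hom(G,H) \leq \hom(K_{d,d},H)^{\frac{n}{2d}}$.

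I do not expect a genuine obstacle here: all of the real content has already been absorbed into Theorem \ref{bipartite} (the Galvin–Tetali bipartite estimate) and into the definition of bipartite reducibility. The only points that need a moment's care are the bookkeeping on the number of vertices of $G \times K_2$ — this is precisely why the exponent halves from $\frac{n}{d}$ to $\frac{n}{2d}$ — and noting that the square-root step is legitimate because homomorphism counts are nonnegative; in particular the edge case $\hom(K_{d,d},H)=0$ is harmless, since it forces $\hom(G \times K_2,H)=0$ and hence $\hom(G,H)=0$.
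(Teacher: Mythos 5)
Your proposal is correct and follows the same route as the paper: apply the Galvin--Tetali bound (Theorem \ref{bipartite}) to the bipartite, $d$-regular, $2n$-vertex graph $G \times K_2$, chain it with the bipartite reducibility inequality, and take square roots. The extra checks you include (regularity of the double cover, the degenerate case) are fine but not needed beyond what the paper records.
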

\begin{proof}
We have that
\[\hom(G,H)^2 \leq \hom(G\times K_2,H) \leq \hom(K_{d,d},H)^{\frac{n}{d}}.\]
So
\[\hom(G,H) \leq \hom(K_{d,d},H)^{\frac{n}{2d}}.\]
\end{proof}

Zhao was able to show that the independent set graph (a single edge with one vertex looped) is in $\mathcal{H}$, and in \cite{zhao2} he extended this to a larger family of graphs. In addition, any bipartite graph $H$ is clearly in $\mathcal{H}$, since for nonbipartite $G$, a bipartite $H$ will give $\hom(G, H) = 0$.

But it is possible to move beyond these examples. Using the properties derived above, the following result is straightforward.

\begin{thm}\label{tensorClosure}
$\mathcal{H}$ is closed under tensor multiplication.
\end{thm}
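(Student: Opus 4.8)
The goal is to show that if $H_1, H_2 \in \mathcal{H}$ then $H_1 \times H_2 \in \mathcal{H}$, i.e. that $\hom(G, H_1\times H_2)^2 \leq \hom(G\times K_2, H_1\times H_2)$ for all $G$.

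The natural plan is to unwind both sides using Theorem \ref{tensormult}. On the left, $\hom(G, H_1\times H_2)^2 = \hom(G,H_1)^2\hom(G,H_2)^2$. On the right, $\hom(G\times K_2, H_1\times H_2) = \hom(G\times K_2, H_1)\hom(G\times K_2, H_2)$. So the desired inequality reduces to
\[
\hom(G,H_1)^2\hom(G,H_2)^2 \leq \hom(G\times K_2, H_1)\hom(G\times K_2, H_2).
\]
But since $H_1, H_2 \in \mathcal{H}$, we have $\hom(G,H_1)^2 \leq \hom(G\times K_2,H_1)$ and $\hom(G,H_2)^2 \leq \hom(G\times K_2,H_2)$, and all quantities are nonnegative integers, so we can simply multiply these two inequalities termwise to get exactly what we want.

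So the proof is essentially three lines: expand both sides with Theorem \ref{tensormult}, invoke the hypothesis that $H_1, H_2$ are bipartite reducible twice, and multiply. I do not anticipate any real obstacle; the only thing to be mildly careful about is that multiplying inequalities requires nonnegativity, which holds since homomorphism counts are nonnegative integers. One could then remark that, combined with Theorem \ref{bipRed}, this already yields new families — for instance tensor powers of the independent set graph — for which unions of $K_{d,d}$ maximize the homomorphism count.

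\begin{proof}
Let $H_1, H_2 \in \mathcal{H}$ and let $G$ be arbitrary. By Theorem \ref{tensormult},
\[
\hom(G, H_1 \times H_2)^2 = \hom(G,H_1)^2 \, \hom(G,H_2)^2
\]
and
\[
\hom(G \times K_2, H_1 \times H_2) = \hom(G \times K_2, H_1)\,\hom(G \times K_2, H_2).
\]
Since $H_1, H_2 \in \mathcal{H}$, we have $\hom(G,H_1)^2 \leq \hom(G\times K_2, H_1)$ and $\hom(G,H_2)^2 \leq \hom(G\times K_2, H_2)$. All four quantities are nonnegative, so multiplying these inequalities gives
\[
\hom(G,H_1)^2\,\hom(G,H_2)^2 \leq \hom(G\times K_2, H_1)\,\hom(G\times K_2, H_2),
\]
which is precisely $\hom(G, H_1\times H_2)^2 \leq \hom(G\times K_2, H_1\times H_2)$. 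As $G$ was arbitrary, $H_1\times H_2 \in \mathcal{H}$.
\end{proof}
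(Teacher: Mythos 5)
Your proof is correct and follows exactly the same route as the paper's: expand both sides via Theorem \ref{tensormult}, apply the bipartite reducibility of $H_1$ and $H_2$ separately, and multiply the resulting inequalities (the nonnegativity remark is a nice touch the paper leaves implicit). No changes needed.
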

\begin{proof}
Simply note that for arbitrary graphs $A,B \in \mathcal{H}$, we have
\begin{align*}
\hom(G, A \times B)^2 = &\left(\hom(G,A)\hom(G,B)\right)^2\\
= &\hom(G,A)^2\hom(G,B)^2\\
\leq &\hom(G\times K_2,A)\hom(G \times K_2,B)\\
= &\hom(G\times K_2,A \times B).
\end{align*}
\end{proof}

Closure under tensor multiplication allows us to put many graphs into $\mathcal{H}$ that were not accessible by previous methods. One simple example that was discussed already is $H \times H$, where $H$ is a graph consisting of a single edge with a loop on one endvertex. $H \times H$ is non-bipartite. and cannot be shown to be in $\mathcal{H}$ using Zhao's result alone. Many other permutations and combinations of graphs in $\mathcal{H}$ exist, allowing us to categorize a large number of graphs.

\subsection{Graph Exponentiation}

We are also able to show that $\mathcal{H}$ is closed under another operation, one that will allow us to reshape the definition itself into something that looks very different from what it is now. This operation is not widely discussed in the literature, so we will take some time to explore it in detail here.

\begin{defn}
For any two graphs $A$ and $B$, the $B$ graph exponent of $A$, denoted $A^B$, is a graph on the set of all functions from the vertices of $B$ to the vertices of $A$, with an edge placed between two functions $f_1$ and $f_2$ if for every edge $uv$ in $B$, $f_1(u)f_2(v)$ is an edge of $A$.
\end{defn}

At first glance, this definition may seem somewhat difficult to work with, and getting an intuitive grasp for what the result of this operation should be is not easy. But just as with the tensor product, we can establish a relationship between this operation and graph homomorphisms that will allow us not only to derive a wide array of properties of this operation, but also to establish its relationship to $\mathcal{H}$.

\begin{thm}\label{exponents}
For any graphs $G$, $H$, and $A$,
\[\hom(G \times A, H) = \hom(G, H^A)\]
\end{thm}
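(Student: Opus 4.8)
The plan is to prove the identity by exhibiting an explicit bijection between the set of homomorphisms $G \times A \to H$ and the set of homomorphisms $G \to H^A$, realizing the familiar ``currying'' correspondence between products and exponentials. I would not route this through Theorem \ref{homoToIso}: that tool certifies isomorphisms of graphs from equalities of homomorphism counts, whereas here we want exactly such an equality of counts, so a direct combinatorial argument is both shorter and self-contained.

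First I would set up the correspondence at the level of bare functions, ignoring edges. A map $\phi \colon V(G \times A) \to V(H)$ is the same thing as a map $\phi \colon V(G) \times V(A) \to V(H)$, and to each such $\phi$ I associate the map $\Phi(\phi) \colon V(G) \to V(H^A)$ defined by letting $\Phi(\phi)(g)$ be the function $V(A) \to V(H)$ with $a \mapsto \phi(g,a)$. Since $V(H^A)$ is by definition the set of all functions from $V(A)$ to $V(H)$, the assignment $\phi \mapsto \Phi(\phi)$ is a bijection from $V(H)^{V(G)\times V(A)}$ onto $\bigl(V(H)^{V(A)}\bigr)^{V(G)} = V(H^A)^{V(G)}$, with inverse the ``uncurrying'' $\psi \mapsto \bigl((g,a)\mapsto \psi(g)(a)\bigr)$.

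Next I would check that $\Phi$ carries homomorphisms to homomorphisms in both directions, so that it restricts to a bijection on the homomorphism sets. Unwinding the definition of the tensor product, $\phi$ is a homomorphism $G \times A \to H$ precisely when, for every $gg' \in E(G)$ and every $aa' \in E(A)$, we have $\phi(g,a)\phi(g',a') \in E(H)$. Unwinding the definitions of graph exponentiation and of homomorphism, $\psi = \Phi(\phi)$ is a homomorphism $G \to H^A$ precisely when, for every $gg' \in E(G)$, the pair $\psi(g),\psi(g')$ is an edge of $H^A$, i.e.\ for every $aa' \in E(A)$ we have $\psi(g)(a)\,\psi(g')(a') \in E(H)$, i.e.\ $\phi(g,a)\phi(g',a') \in E(H)$. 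These two conditions are word-for-word identical, differing only in the order of the two universal quantifiers, so $\phi \in \hom(G \times A, H)$ if and only if $\Phi(\phi) \in \hom(G, H^A)$, and equating the cardinalities of the two sets yields the claim.

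I do not expect a serious obstacle; the only point requiring care is the bookkeeping forced by the conventions that all graphs are undirected and may carry loops. An ``edge $uv$'' should be read as an unordered pair, so the edge condition for $H^A$ between $f_1$ and $f_2$ asserts $f_1(u)f_2(v) \in E(H)$ for both orderings of $\{u,v\}$, and the loop case $u=v$ (a loop of $A$ or of the exponent graph) is subsumed automatically; the corresponding condition on $\phi$ is symmetric in exactly the same way, so none of this disturbs the quantifier-matching above. I would note this symmetry once at the outset and then proceed without further comment.
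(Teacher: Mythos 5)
Your proposal is correct and is essentially the paper's own argument: the paper also proves this by the currying bijection $\phi \mapsto (g \mapsto \phi(g,\cdot))$ and checks that it matches homomorphisms on the two sides. Your version is slightly tidier in observing that the two edge-preservation conditions are literally the same statement with the quantifiers reordered, which handles both directions of the correspondence at once where the paper just asserts that the reasoning reverses.
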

\begin{proof}
Let $\phi$ be a homomorphism from $G \times A$ to $H$. Let $\phi_g: V\left(A\right) \rightarrow V\left(H\right)$ be the function given by $\phi_g\left(a\right) = \phi\left(\left(g, a\right)\right)$. That is, $\phi_g$ is just $\phi$ after fixing a vertex $g \in V\left(G\right)$. Define $\psi$ to be the function on the vertices of $G$ given by $\psi\left(g\right) = \phi_g$. Then $\psi$ is a function from the vertices of $G$ to the vertices of $H^A$ (recalling that those vertices are themselves functions). It preserves adjacency, because if $g_1g_2$ is an edge of $G$, then for all edges $a_1a_2$ of $A$, $\left(g_1, a_1\right)\left(g_2, a_2\right)$ is an edge of $G \times A$, and so $\phi\left(\left(g_1, a_1\right)\right)\phi\left(\left(g_2, a_2\right)\right)$ will be an edge of $H$. Therefore, $\psi$ is a homomorphism. This reasoning can also be done in reverse, so there is a one-to-one correspondence between homomorphisms from $G \times A$ to $H$ and homomorphisms from $G$ to $H^A$, i.e.
\[\hom(G \times A, H) = \hom(G, H^A)\]
\end{proof}

As in the case of tensor products, this relationship with graph homomorphisms will allow us to develop a rich set of properties for this operation. We prove a number of these below, some of which serve to motivate thinking about this operation as an exponent.

\begin{thm}[Properties of Graph Exponents]
For arbitrary graphs $A$, $B$ and $C$,
\begin{align}
&A^{l_1} = A \tag{Identity}\\
&(A^B)^C = A^{B \times C}\\
&(A \times B)^C = A^C \times B^C \tag{Distributivity}\\
&A^B \times A^C = A^{B \cup C}\\
&\underbrace{A \times A \times \ldots \times A}_{k \textnormal{ times}} = A^{l_k}\label{repeatedK}\\
&(A \cup B)^C = A^C \cup B^C \cup E_n \tag{Freshman's Dream}\\
&\hom(A,B^C) = \hom (C, B^A)
\end{align}
In freshman's dream $n$ is chosen such that $(A \cup B)^C$ and $A^C \cup B^C \cup E_n$ have the same number of vertices, and $C$ is connected. Furthermore, for non-empty $A$, we show that $A^B$ is bipartite exactly when $A$ is bipartite and $B$ is not.
\end{thm}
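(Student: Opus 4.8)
The plan is to dispatch the algebraic identities uniformly via Theorem~\ref{homoToIso}, and to treat the Freshman's dream and the bipartiteness statement by hand. For the identities the mechanism is that $\hom(G,-)$ turns an exponential into a tensor product through Theorem~\ref{exponents}, and turns a tensor product or a disjoint union into a numerical product through Theorem~\ref{tensormult} (using also that a homomorphism count over a disjoint union is the product of the counts over the pieces). Thus for every connected $G$ each side of each identity becomes the same product of ordinary homomorphism numbers, and Theorem~\ref{homoToIso} concludes. For example: $\hom(G,A^{l_1}) = \hom(G\times l_1,A) = \hom(G,A)$ since $l_1$ is the tensor identity; $\hom(G,(A^B)^C) = \hom((G\times C)\times B,A) = \hom(G\times (B\times C),A) = \hom(G,A^{B\times C})$ by associativity and commutativity of $\times$; $\hom(G,(A\times B)^C) = \hom(G\times C,A)\,\hom(G\times C,B) = \hom(G,A^C)\,\hom(G,B^C) = \hom(G,A^C\times B^C)$; $\hom(G,A^{B\cup C}) = \hom\bigl((G\times B)\cup(G\times C),A\bigr) = \hom(G,A^B)\,\hom(G,A^C) = \hom(G, A^B\times A^C)$ using distributivity of $\times$ over $\cup$; the identity $A^{l_k}=A\times\cdots\times A$ follows by writing $l_k = l_1\cup\cdots\cup l_1$ and iterating; and $\hom(A,B^C) = \hom(A\times C,B) = \hom(C\times A,B) = \hom(C,B^A)$ directly. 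I would write one of these out in full and leave the rest as routine.

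The Freshman's dream $(A\cup B)^C = A^C\cup B^C\cup E_n$ is the identity that resists this formalism, because $\hom(G\times C, A\cup B)$ does not split additively over $A$ and $B$ once $G\times C$ is disconnected. So I would build the isomorphism directly. A vertex of $(A\cup B)^C$ is a map $f\colon V(C)\to V(A)\sqcup V(B)$; call $f$ \emph{pure} if its image lies entirely in $V(A)$ or entirely in $V(B)$, and \emph{mixed} otherwise. The pure maps into $A$ are exactly the vertices of $A^C$, and since any edge of $A\cup B$ with both endpoints in $V(A)$ is an edge of $A$, two pure-$A$ maps are adjacent in $(A\cup B)^C$ exactly when they are adjacent in $A^C$; similarly for $B$. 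The crux is that connectivity of $C$ makes every adjacency ``constant-sided'': if $f_1\sim f_2$, then for each edge $uv$ of $C$ the vertices $f_1(u)$ and $f_2(v)$ lie on the same side of $A\cup B$ (using that an edge of $C$ is unordered, so it constrains both orders), and propagating this along paths in the connected graph $C$ shows that $f_1$, and hence $f_2$, takes all its values on one common side. Therefore a mixed map is an isolated vertex of $(A\cup B)^C$ (carrying no loop), and no pure-$A$ map is adjacent to a pure-$B$ map; so $(A\cup B)^C$ decomposes as the disjoint union of $A^C$, $B^C$, and the edgeless graph $E_n$ on the mixed maps, with $n = |V(A)\cup V(B)|^{|V(C)|} - |V(A)|^{|V(C)|} - |V(B)|^{|V(C)|}$ exactly the count that matches vertex numbers. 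Connectivity of $C$ really is needed: for edgeless $C$ all three exponentials are complete looped graphs and the statement fails.

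For the bipartiteness characterization I would invoke Lemma~\ref{bipartiteCharacterization} together with the fact, established above for the tensor product, that $G\times B$ is non-bipartite exactly when both $G$ and $B$ are. If $A$ is bipartite and $B$ is not, then for every non-bipartite $G$ the graph $G\times B$ is non-bipartite, so $\hom(G,A^B) = \hom(G\times B,A) = 0$; as this holds for all non-bipartite $G$, Lemma~\ref{bipartiteCharacterization} gives that $A^B$ is bipartite. For the converse I argue the contrapositive, noting that the negation of ``$A$ bipartite and $B$ non-bipartite'' is ``$A$ non-bipartite, or $B$ bipartite''. If $A$ is non-bipartite it contains an odd cycle; when $B$ has an edge the constant maps $V(B)\to V(A)$ span a copy of $A$ inside $A^B$ (two of them, with values $a$ and $b$, being adjacent iff $ab\in E(A)$), and when $B$ has no edge every vertex of $A^B$ carries a loop, so in either case $A^B$ contains an odd closed walk and is non-bipartite. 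If instead $B$ is bipartite (and $A$, being non-empty, has an edge), then $\hom(B,A)>0$ --- two-colour each component of $B$ and send its colour classes to the two endpoints of some edge of $A$ --- and since the loops of $A^B$ are precisely the homomorphisms from $B$ to $A$, the graph $A^B$ has a loop and is non-bipartite.

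I expect the Freshman's dream to be the main obstacle: it is the only part here that needs a genuine combinatorial argument rather than bookkeeping with homomorphism counts, and one must be careful both about the value of $n$ (equivalently, about the meaning of $E_n$) and about the fact that it is connectivity of $C$ that makes the propagation step go through. The ``only if'' direction of the bipartite characterization is the next most delicate point, its key ingredient being the observation that loops of $A^B$ correspond bijectively to homomorphisms $B\to A$ --- the same observation that underlies Theorem~\ref{widomAndFriends}.
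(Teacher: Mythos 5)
Your treatment of the algebraic identities and of the bipartiteness characterization is correct and follows essentially the same route as the paper: reduce each identity to an equality of homomorphism counts via Theorem~\ref{exponents} and Theorem~\ref{tensormult} and invoke Theorem~\ref{homoToIso}; for bipartiteness, use Lemma~\ref{bipartiteCharacterization} in one direction and produce odd closed walks (constant maps when $B$ has an edge, loops coming from homomorphisms $B\to A$ when $B$ is bipartite) in the other. These parts are fine, and your ``only if'' argument via loops of $A^B$ is a harmless variant of the paper's argument via non-bipartite test graphs $G$.

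The Freshman's dream is where there is a genuine gap. You replace the paper's homomorphism-counting argument by a direct structural decomposition, but your key claim that every mixed map is an isolated vertex of $(A\cup B)^C$ is false. Your propagation only yields that along a walk $u_0u_1u_2\ldots$ in $C$ the side of $f_1(u_0)$ equals the side of $f_2(u_1)$, which equals the side of $f_1(u_2)$, and so on; this makes each of $f_1,f_2$ constant-sided on each parity class of walks, but it does not merge the two classes unless $C$ contains an odd closed walk. When $C$ is connected and bipartite with parts $X$ and $Y$, a map $f_1$ sending $X$ into $A$ and $Y$ into $B$ can be adjacent to a map $f_2$ sending $X$ into $B$ and $Y$ into $A$. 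Concretely, take $A=B=C=K_2$ with $V(A)=\{a_1,a_2\}$, $V(B)=\{b_1,b_2\}$, $V(C)=\{u,v\}$, and let $f_1(u)=a_1$, $f_1(v)=b_1$, $f_2(u)=b_2$, $f_2(v)=a_2$; then $f_1(u)f_2(v)=a_1a_2$ and $f_1(v)f_2(u)=b_1b_2$ are both edges of $A\cup B$, so these two mixed maps are adjacent. This is not a repairable defect of your argument alone: the identity itself fails in this example, since
\[
\hom\bigl(K_2,(K_2\cup K_2)^{K_2}\bigr)=\hom(2K_2,\,2K_2)=16,
\qquad
\hom\bigl(K_2,\,K_2^{K_2}\cup K_2^{K_2}\cup E_8\bigr)=4+4+0=8.
\]
(The paper's own proof has the matching flaw: the step $\hom(G\times C,A\cup B)=\hom(G\times C,A)+\hom(G\times C,B)$ requires $G\times C$ to be connected, which fails when $G$ and $C$ are both bipartite, e.g.\ $G=C=K_2$.) Both proofs, and the statement, become correct if one additionally assumes $C$ is connected and non-bipartite: then an odd closed walk in $C$ identifies the two parity classes, your propagation forces any two adjacent maps to take all values on a single common side, and the mixed maps really are isolated and loopless.
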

\begin{proof}
The proof of most parts will resemble those given for tensor products in Section \ref{tp}. As before, we first establish the desired property in the context of homomorphisms using the relationship we obtained (in this case in Theorem \ref{exponents}), and then use Theorem \ref{homoToIso} to remove the dependence on homomorphisms.

It is easy to see that the identity result is just (\ref{repeatedK}) with $k = 1$. We include it simply because it demonstrates the existence of an identity element for this operation.

The first substantive statement to be proved is $(A^B)^C = A^{B \times C}$. For all $G$,
\begin{align*}
\hom(G, (A^B)^C) = &\hom(G \times C, A^B)\\
= &\hom(G \times C \times B, A)\\
= &\hom(G \times (B \times C), A)\\
= &\hom(G, A^{B \times C}).
\end{align*}
By Theorem \ref{homoToIso}, this implies that $(A^B)^C = A^{B \times C}$.

The same sort of reasoning gives us that $(A \times B)^C = A^C \times B^C$. For all $G$,
\begin{align*}
\hom(G, (A \times B)^C) = &\hom(G \times C, A \times B)\\
= &\hom(G \times C, A)\hom(G \times C, B)\\
= &\hom(G, A^C)\hom(G, B^C)\\
= &\hom(G, A^C \times B^C),
\end{align*}
and so, using as always Theorem \ref{homoToIso}, this means that $(A \times B)^C = A^C \times B^C$.

To demonstrate the claim that $A^B \times A^C = A^{B \cup C}$, observe that for all $G$ we have
\begin{align*}
\hom(G, A^B \times A^C) = &\hom(G, A^B)\hom(G, A^C)\\
= &\hom(G\times B, A)\hom(G \times C, A)\\
= &\hom((G\times B) \cup (G \times C), A)\\
= &\hom(G\times(B \cup C), A)\\
= &\hom(G, A^{B \cup C})
\end{align*}
which implies that $A^B \times A^C = A^{B \cup C}$.

To prove that $A \times A \times \cdots \times A = A^{l_k}$, note that for all $G$,
\begin{align*}
\hom(G, A \times A \times \cdots \times A) = &\hom(G, A)^k\\
= &\hom(kG, A)\\
= &\hom(G \times l_k, A)\\
= &\hom(G, A^{l_k})
\end{align*}
and so $A \times A \times \cdots \times A = A^{l_k}$.

Freshman's dream will require just a little more work. Let $G$ be connected, and this time, let's also assume $G$ is non-empty. Then for any natural number $n$,
\begin{align*}
\hom(G, (A \cup B)^C) = &\hom(G \times C, A \cup B)\\
= &\hom(G \times C, A)+\hom(G \times C, B)\\
= &\hom(G, A^C)+\hom(G, B^C)\\
= &\hom(G, A^C \cup B^C)\\
= &\hom(G, A^C \cup B^C \cup E_n),
\end{align*}
where the last step follows because $G$ is non-empty, and so the image of any homomorphism from $G$ cannot contain an isolated vertex, meaning we can add isolated vertices at will.

To make use of Theorem \ref{homoToIso}, it still remains to find a value of $n$ such that this expression will hold when $G$ is empty. But if $G$ is both empty and connected, it is a single vertex, and so $\hom(G, H)$ just counts the vertices of $H$. So if we choose $n$ such that $(A \cup B)^C$ has the same number of vertices as $A^C \cup B^C \cup E_n$, the number of homomorphisms from the singleton graph will be the same for each. Now we have equality for all connected $G$, and so Theorem \ref{homoToIso} gives us that
\[(A \cup B)^C = A^C \cup B^C \cup E_n\]

The claim that $\hom(A,B^C) = \hom(C, B^A)$ holds because
\begin{align*}
\hom(A,B^C) = &\hom(A \times C, B)\\
= &\hom(C \times A, B)\\
= &\hom(C, B^A).
\end{align*}

Lastly, the claim that $A^B$ is bipartite for non-empty $A$ exactly when $A$ is bipartite and $B$ is not can be shown as follows. Assume first that $B$ is bipartite. Then for all graphs $G$, $G \times B$ is bipartite. Since $A$ is non-empty, it has at least one edge which can be used to color any bipartite graph. Thus,
\[\hom(G, A^B) = \hom(G \times B, A) > 0\]
and so by Lemma \ref{bipartiteCharacterization}, $A^B$ is non-bipartite. Now suppose $B$ is non-bipartite. Then $G \times B$ is bipartite exactly when $G$ is bipartite. Suppose $A$ is bipartite. Then for all non-bipartite $G$,
\[\hom(G, A) = 0.\]
But whenever $G$ is non-bipartite so is $G \times B$, and so
\[\hom(G, A^B) = \hom(G \times B, A) = 0\]
which by Lemma \ref{bipartiteCharacterization} means that $A^B$ is bipartite.
On the other hand, suppose $A$ is non-bipartite. Then there exists a non-bipartite $G$ such that
\[\hom(G, A) > 0.\]
Notice that an $A$-coloring of $G$ gives rise to an $A$-coloring of $G \times B$, since we can color each vertex of $G \times B$ with the color of the underlying vertex from $G$. Thus,
\[\hom(G, A^B) = \hom(G \times B, A) > 0\]
and so $A^B$ is non-bipartite.
\end{proof}

One immediate application of these results is that the criterion for a graph $H$ to be in $\mathcal{H}$ can be re-framed as the statement that for all $G$
\[\hom(G, H^{l_2}) \leq \hom(G, H^{K_2})\]
where $l_2$ is the graph on two vertices with a loop on each vertex. While none of our results make use of this form preferentially over the statement given in (\ref{hstar}), it has the appeal that $G$ now appears by itself in each homomorphism, which could prove useful for certain kinds of arguments (particularly inductions).

This kind of re-framing of homomorphism problems is something that graph exponentiation does quite well. In fact, graph exponentiation is able to capture all of the behavior of graph homomorphisms, in the following sense.

\begin{obs}
Let $G$ and $H$ be arbitrary. Then $\hom(G, H)$ is equal to the number of loops in $H^G$.
\end{obs}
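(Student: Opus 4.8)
The statement to prove is that $\hom(G,H)$ equals the number of loops in $H^G$. The plan is to characterize loops in an exponential graph directly from the definition and observe that they correspond exactly to homomorphisms.

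First I would unwind the definition of $H^G$: its vertices are functions $f : V(G) \to V(H)$, and there is an edge between $f_1$ and $f_2$ when for every edge $uv \in E(G)$, $f_1(u)f_2(v) \in E(H)$. A loop at $f$ is the special case $f_1 = f_2 = f$ of this adjacency condition, so $f$ carries a loop in $H^G$ precisely when for every edge $uv \in E(G)$ we have $f(u)f(v) \in E(H)$ — which is exactly the statement that $f$ is a homomorphism from $G$ to $H$. Thus the loops of $H^G$ are in bijection with the elements of $\mathrm{Hom}(G,H)$, and counting them gives $\hom(G,H)$.

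Alternatively, and perhaps more in keeping with the homomorphism-centric style of the preceding section, I would derive it from Theorem \ref{exponents}. The number of loops in any graph $K$ equals $\hom(l_1, K)$, since a homomorphism from the single looped vertex $l_1$ to $K$ must send that vertex to a vertex of $K$ bearing a loop. Applying this with $K = H^G$ and then using Theorem \ref{exponents} (with the roles $G \mapsto l_1$, $A \mapsto G$, $H \mapsto H$) gives
\[
\#\{\text{loops in } H^G\} = \hom(l_1, H^G) = \hom(l_1 \times G, H) = \hom(G, H),
\]
where the last equality uses that $l_1$ is the identity for the tensor product (the Identity property of the tensor product established above). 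Either route is essentially immediate; there is no real obstacle here, the observation being a direct consequence of definitions together with one already-proved identity. I would present the short direct argument and perhaps remark on the slicker derivation via Theorem \ref{exponents}.
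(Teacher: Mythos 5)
Your direct argument is exactly the paper's proof: a loop at $f$ in $H^G$ unwinds to the condition that $f(u)f(v) \in E(H)$ for every edge $uv$ of $G$, i.e.\ that $f$ is a homomorphism. The alternative derivation via $\hom(l_1, H^G) = \hom(l_1 \times G, H) = \hom(G,H)$ is a correct bonus, but the main route matches the paper.
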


To see why this is true, let $f$ be a looped vertex of $H^G$. Then by definition, for all edges $g_1 g_2$ of $G$, $f(g_1)f(g_2)$ is an edge of $H$. So the looped vertices of $H^G$ are exactly the homomorphisms from $G$ to $H$. This suggests a more radical restatement of (\ref{hstar}). For a graph $H$ to be in $\mathcal{H}$, we simply require that for all $G$, $H^{G \times k_2}$ has at least as many loops as $H^{G \times l_2}$.

The properties of graph exponentiation also make it easy to establish its relationship to $\mathcal{H}$.

\begin{thm}\label{expClosure}
Let $A$ be an arbitrary graph. Then $\mathcal{H}$ is closed under $A$-exponentiation. That is, for any graph $H \in \mathcal{H}$, $H^A \in \mathcal{H}$. Note that this is strictly more general than saying $\mathcal{H}$ is closed under graph exponentiation, since $A$ need not be in $\mathcal{H}$.
\end{thm}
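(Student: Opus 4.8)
The plan is to reduce everything to the defining inequality for $\mathcal{H}$ applied to a cleverly chosen graph, using the adjunction between the tensor product and graph exponentiation (Theorem \ref{exponents}) together with the commutativity and associativity of $\times$. The key observation is that the operation $G \mapsto G^A$ (on the ``$H$-side'') corresponds, under $\hom$, to the operation $G \mapsto G \times A$ (on the ``$G$-side''), and tensoring with $A$ commutes with tensoring with $K_2$.

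Concretely, I would start from an arbitrary graph $G$ and rewrite $\hom(G, H^A)$ as $\hom(G \times A, H)$ via Theorem \ref{exponents}. Since $H \in \mathcal{H}$, applying the defining inequality \eqref{hstar} to the graph $G \times A$ gives
\[
\hom(G, H^A)^2 = \hom(G \times A, H)^2 \leq \hom\big((G \times A) \times K_2,\, H\big).
\]
Then I would use commutativity and associativity of the tensor product to rearrange $(G \times A) \times K_2 = (G \times K_2) \times A$, and apply Theorem \ref{exponents} once more in the other direction to get
\[
\hom\big((G \times K_2) \times A,\, H\big) = \hom(G \times K_2,\, H^A).
\]
Chaining these yields $\hom(G, H^A)^2 \leq \hom(G \times K_2, H^A)$ for all $G$, which is exactly the statement that $H^A \in \mathcal{H}$.

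There is no real obstacle here beyond bookkeeping: the proof is a three-line computation once the adjunction of Theorem \ref{exponents} and the basic algebraic properties of $\times$ are in hand. The only point worth stating carefully is that $A$ is genuinely arbitrary — nothing in the argument requires $A \in \mathcal{H}$, since $A$ only ever appears on the $G$-side of a $\hom$, where no hypothesis is needed. I would close with a remark to that effect, matching the parenthetical claim in the theorem statement.
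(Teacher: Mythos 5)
Your proposal is correct and is essentially identical to the paper's own proof: both apply the defining inequality \eqref{hstar} to $G \times A$, use the adjunction of Theorem \ref{exponents} on each side, and commute $A$ past $K_2$ in the tensor product. Nothing further is needed.
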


\begin{proof}
Let $H \in \mathcal{H}$ be given. Then
\[\hom(G,H)^2 \leq \hom(G \times K_2, H)\]
for all $G$. Thus, in particular, it holds for $G' = G \times A$. So
\begin{align*}
\hom(G,H^A)^2 = &\hom(G \times A,H)^2\\
\leq &\hom((G \times A) \times K_2, H)\\
= &\hom(G \times K_2 \times A, H)\\
= &\hom(G \times K_2, H^A)
\end{align*}
as desired.
\end{proof}

This is enough to give the promised relationship to $\mathcal{H}$, but graph exponentiation has a second mode of interaction.

\begin{thm}\label{bipartiteClosure}
Let $A$ be an arbitrary bipartite graph. Then for all graphs $H$  (not necessarily with $H \in \mathcal{H}$), $H^A \in \mathcal{H}$.
\end{thm}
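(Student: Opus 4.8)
The plan is to reduce the claim to a statement purely about bipartite graphs and their bipartite double covers, using the exponent identities already proved. Recall that $H^A \in \mathcal{H}$ means $\hom(G, H^A)^2 \le \hom(G \times K_2, H^A)$ for all $G$. Using Theorem \ref{exponents} together with the commutativity and associativity of the tensor product, rewrite both sides:
\begin{align*}
\hom(G, H^A)^2 &= \hom(G \times A, H)^2,\\
\hom(G \times K_2, H^A) &= \hom((G \times K_2) \times A, H) = \hom\big((G \times A) \times K_2, H\big).
\end{align*}
So, setting $G' := G \times A$, it suffices to prove $\hom(G', H)^2 \le \hom(G' \times K_2, H)$ — but only for graphs $G'$ of the special form $G \times A$ with $A$ bipartite.

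The point of this reformulation is that $G'$ is now bipartite: by the bipartiteness property of the tensor product, $G \times A$ is bipartite whenever $A$ is. The structural fact I would then invoke is the classical observation that the bipartite double cover of a bipartite graph splits into two copies of itself, i.e. $G' \times K_2 = G' \cup G'$ (the disjoint union of two copies of $G'$) whenever $G'$ is bipartite. For connected $G'$ with bipartition $(X, Y)$, the vertex sets $(X \times \{0\}) \cup (Y \times \{1\})$ and $(X \times \{1\}) \cup (Y \times \{0\})$ each induce a copy of $G'$ in $G' \times K_2$, and there is no edge between the two sets, since every edge of $G' \times K_2$ simultaneously crosses the two layers of $K_2$ and crosses from $X$ to $Y$. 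The general (disconnected) case then follows from distributivity of $\times$ over $\cup$, as each bipartite component is handled separately.

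Given this, $\hom(G' \times K_2, H) = \hom(G' \cup G', H) = \hom(G', H)^2$ — in fact we get equality, not merely the required inequality — which finishes the proof. I expect the only genuine content here is the structural lemma $G' \times K_2 = G' \cup G'$ for bipartite $G'$; the rest is bookkeeping with Theorem \ref{exponents} and the tensor identities. A slicker but essentially equivalent route would be to prove $\hom(G' \times K_2, H) = \hom(G', H)^2$ for all $H$ directly and then appeal to Theorem \ref{homoToIso} to conclude $G' \times K_2 = G' \cup G'$, but the explicit vertex partition above seems the shortest path and makes the disjointness transparent.
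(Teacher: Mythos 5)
Your proposal is correct and follows essentially the same route as the paper: reduce via Theorem \ref{exponents} to the graph $G' = G \times A$, observe that $G'$ is bipartite, and use the structural fact that $G' \times K_2 = G' \cup G'$ for bipartite $G'$ (the paper's Lemma \ref{l2toK2}, stated there as $G' \times K_2 = G' \times l_2$), obtaining the same equality $\hom(G \times K_2, H^A) = \hom(G, H^A)^2$. Your explicit vertex-partition argument for the structural lemma matches the paper's proof of Lemma \ref{l2toK2}.
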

\begin{proof}
Notice that when $G$ is bipartite, $G \times l_2 = G \times K_2$ (we leave this as an exercise for now, but a proof is provided in Section \ref{widomSection} as Lemma \ref{l2toK2}). Since $A$ is bipartite, $G \times A$ is bipartite, and so
\[G \times A \times l_2 = G \times A \times K_2\]
since the tensor product of a bipartite graph with $l_2$ is the same as its product with $K_2$, up to isomorphism. But then
\begin{align*}
\hom(G, H^A)^2 = &\hom(G \times A, H)^2\\
= &\hom(G \times A \times l_2, H)\\
= &\hom(G \times A \times K_2, H)\\
= &\hom(G \times K_2, H^A).
\end{align*}
So in this case the desired inequality is always tight.
\end{proof}

Notice also that this was the last case in Theorem \ref{fullthm} to be proven, and so together with Theorem \ref{tensorClosure} and Theorem \ref{expClosure}, we have proven all of the cases of Theorem \ref{fullthm}.

\section{Exponential graphs and the ``other" maximizer}\label{widomSection}
Conjecture \ref{fullcnj}, while false, has provided valuable intuition about Question \ref{mainQuestion}. So far we have discussed one of the upper bounds it posited, namely $\hom(K_{d,d},H)^{\frac{n}{2d}}$, but what about the other one?

As it turns out, many of the ideas we have developed in the preceeding sections are applicable to finding graphs on this side of the problem as well. With all of the tools we now have, it is possible to prove Theorem \ref{widomAndFriends}, which provides a clear maximizer for a large family of graphs, most notably the Widom-Rowlinson graph.

The result will require two small lemmas. Let $G^{\circ}$ denote the graph formed by placing a loop on every vertex in $G$, and let $l(G)$ denote the induced subgraph of $G$ formed by the set of looped vertices of $G$.

\begin{lem}\label{loops}
For any graphs $G$ and $H$,
\[\hom(G, l(H)) = \hom(G^{\circ}, H).\]
\end{lem}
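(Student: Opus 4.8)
The plan is to exhibit a natural bijection between homomorphisms $G \to l(H)$ and homomorphisms $G^\circ \to H$, and indeed to argue that these are literally the same set of functions once we identify $V(l(H))$ with the set of looped vertices of $H$. First I would unpack the definitions: a homomorphism $\phi: G^\circ \to H$ is a function $V(G) \to V(H)$ such that $\phi(u)\phi(v) \in E(H)$ for every edge $uv$ of $G^\circ$. Since $G^\circ$ has all the edges of $G$ together with a loop at every vertex, the edge set of $G^\circ$ is $E(G) \cup \{vv : v \in V(G)\}$. So the condition on $\phi$ splits into two parts: (i) for every edge $uv \in E(G)$, $\phi(u)\phi(v) \in E(H)$; and (ii) for every vertex $v \in V(G)$, $\phi(v)\phi(v) \in E(H)$, i.e. $\phi(v)$ is a looped vertex of $H$.

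Condition (ii) says precisely that $\phi$ takes values in $l(H)$, the induced subgraph on the looped vertices. Given that, condition (i) becomes the statement that $\phi$, viewed as a map $V(G) \to V(l(H))$, sends edges of $G$ to edges of $l(H)$ — here I would note that since $l(H)$ is an \emph{induced} subgraph of $H$, an edge $\phi(u)\phi(v)$ lies in $E(H)$ if and only if it lies in $E(l(H))$, which is exactly what makes the two edge-conditions match up. Conversely, any homomorphism $\psi: G \to l(H)$ automatically satisfies (i) (edges of $l(H)$ are edges of $H$) and (ii) (every vertex of $l(H)$ is looped in $H$), so it is a homomorphism $G^\circ \to H$. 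This establishes that the two homomorphism sets coincide, giving the equality of counts.

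I do not anticipate a genuine obstacle here; the only point requiring a moment's care is the use of \emph{induced} in the definition of $l(H)$, which is what guarantees that no edges are lost or gained when passing between $l(H)$ and $H$ — if $l(H)$ were merely a spanning-type or arbitrary subgraph on the looped vertices, the argument would break. One could alternatively phrase the whole proof in the language of Theorem \ref{homoToIso}, but since the statement is about a fixed $G$ and the bijection is so direct, the elementary counting argument above is cleaner and self-contained.
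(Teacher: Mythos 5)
Your proof is correct and takes essentially the same approach as the paper: both arguments rest on the observations that the loops of $G^{\circ}$ force images into the looped vertices of $H$, and that $l(H)$ being an induced subgraph means no edge conditions are lost or gained. The paper merely organizes this by passing through the intermediate quantity $\hom(G^{\circ}, l(H))$, whereas you identify the two homomorphism sets directly.
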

\begin{proof}
Note that
\[\hom(G, l(H)) = \hom(G^{\circ}, l(H)),\]
since the added loops only impose a restriction on unlooped vertices of $l(H)$, of which there are none. But it is also true that
\[\hom(G^{\circ}, l(H)) = \hom(G^{\circ}, H)\]
since no vertex of $G^{\circ}$ is permitted to map to the unlooped vertices of $H$.
Thus,
\[\hom(G, l(H)) = \hom(G^{\circ}, H).\]
\end{proof}

The second lemma can be thought of as another property of tensor multiplication. We have made use of the result already, in proving Theorem \ref{bipartiteClosure}.

\begin{lem}\label{l2toK2}
For any bipartite graph $B$, $B \times K_2 = B \times l_2$.
\end{lem}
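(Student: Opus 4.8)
The plan is to push the isomorphism claim through the homomorphism-counting machinery already set up, reducing it to an elementary identity about integers via Theorem \ref{homoToIso}. By Theorem \ref{tensormult}, for every graph $G$ we have $\hom(G, B \times K_2) = \hom(G,B)\hom(G,K_2)$ and $\hom(G, B \times l_2) = \hom(G,B)\hom(G,l_2)$. So it suffices to prove $\hom(G,B)\hom(G,K_2) = \hom(G,B)\hom(G,l_2)$ for all connected $G$; then Theorem \ref{homoToIso} yields $B \times K_2 = B \times l_2$. (Equivalently, one may note $B \times l_2 = 2B$ and aim to recognize $B \times K_2$ as the disjoint union of two copies of $B$.)

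First I would evaluate $\hom(G,l_2)$ and $\hom(G,K_2)$ for connected $G$. A homomorphism from a connected $G$ into $l_2$ must be constant, since every edge of $G$ must map to one of the two loops of $l_2$, forcing equal endpoints, and connectedness propagates this; hence $\hom(G,l_2)=2$. A homomorphism from $G$ into $K_2$ is a proper $2$-colouring, so $\hom(G,K_2)=2$ when $G$ is connected and bipartite, and $\hom(G,K_2)=0$ when $G$ is non-bipartite.

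Then I would split on whether $G$ is bipartite. If $G$ is connected and bipartite, both $\hom(G,K_2)$ and $\hom(G,l_2)$ equal $2$, so the two products agree. If $G$ is non-bipartite, then $\hom(G,K_2)=0$, and since $B$ is bipartite, Lemma \ref{bipartiteCharacterization} gives $\hom(G,B)=0$; hence both products vanish and again agree. This exhausts all connected $G$, so the proof is complete.

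There is no serious obstacle here; the only subtlety is that the naive identity $\hom(G,K_2)=\hom(G,l_2)$ is \emph{false} in general — it breaks precisely for non-bipartite $G$ — so one genuinely needs the factor $\hom(G,B)$ to annihilate that case, and this is exactly where the hypothesis that $B$ is bipartite is used. A more hands-on alternative would be to fix a bipartition $V(B)=X\cup Y$ and check directly that in $B\times K_2$ the vertex set $\{(x,1):x\in X\}\cup\{(y,2):y\in Y\}$ and its complement each induce a copy of $B$ with no edges between them, exhibiting the isomorphism $B\times K_2 \cong B\cup B = B\times l_2$ explicitly; but the homomorphism-counting argument is shorter and matches the style used elsewhere in the paper.
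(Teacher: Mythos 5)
Your argument is correct, but it takes a genuinely different route from the paper. You reduce the isomorphism to a counting identity: by Theorem \ref{tensormult}, both sides have homomorphism counts $\hom(G,B)\hom(G,K_2)$ and $\hom(G,B)\hom(G,l_2)$ respectively, and for connected $G$ you check these agree --- either because $\hom(G,K_2)=\hom(G,l_2)=2$ when $G$ is bipartite, or because $\hom(G,B)=0$ kills both products when $G$ is not (this is exactly where the bipartiteness of $B$ enters, via Lemma \ref{bipartiteCharacterization}) --- and then invoke Theorem \ref{homoToIso}. The paper instead gives the explicit structural argument you sketch only as an ``alternative'': fix a bipartition $V(B)=B_1\cup B_2$, sort the vertices of $B\times K_2$ into the four classes $(B_i,k_j)$, observe that the only edges run between $(B_1,k_1)$ and $(B_2,k_2)$ and between $(B_1,k_2)$ and $(B_2,k_1)$, and recognize two disjoint copies of $B$. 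The trade-off: your version is shorter and matches the Lov\'asz-style template used for the other tensor-product identities in Section \ref{tp}, but it leans on Theorem \ref{homoToIso}, a nontrivial black box; the paper's version is fully constructive and exhibits the isomorphism $B\times K_2\cong B\cup B$ directly. Your observation that the naive identity $\hom(G,K_2)=\hom(G,l_2)$ fails for non-bipartite $G$, so that the factor $\hom(G,B)$ is genuinely needed, is the right subtlety to flag. There is no circularity: none of Theorems \ref{homoToIso}, \ref{tensormult}, or Lemma \ref{bipartiteCharacterization} depends on this lemma.
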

\begin{proof}
Since $B$ is bipartite, it can be bipartitioned into some $B_1$ and $B_2$. Recalling that the tensor product of $B$ and $K_2$ is defined on vertex set $V(B) \times V(K_2)$, in order for there to be an edge between two vertices in $B \times K_2$, they must contain vertices from $B$ that are from different bipartitions, and they must not both contain the same vertex of $K_2$. Partition the vertices of $B \times K_2$ into four classes, based on which bipartition the vertex from $B$ belongs to, and which vertex of $K_2$ is used. Call the partitions $(B_1, k_1),(B_1, k_2),(B_2, k_1),$ and $(B_2, k_2)$. Then vertices in $(B_1, k_1)$ can be adjacent to vertices in $(B_2, k_2)$, and vertices in $(B_1, k_2)$ can be adjacent to vertices in $(B_2, k_1)$, but these will be the only edges present. Notice that this is just two copies of $B$, and so
$B \times K_2 = B \cup B = B \times l_2$.
\end{proof}

This is enough to give a proof of Theorem \ref{widomAndFriends}, the main result of this section.

\begin{thm*}[Restatement of Theorem \ref{widomAndFriends}]
Let $G$ be an arbitrary $d$-regular graph on $n$ vertices. For an arbitrary graph $H$ and bipartite graph $B$,
\[\hom(G, l(H^B)) \leq \hom(K_{d+1}, l(H^B))^{\frac{n}{d+1}}.\]
\end{thm*}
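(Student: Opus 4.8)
The plan is to reduce everything to Theorem \ref{bipartite} applied to a well-chosen bipartite graph, using Lemma \ref{loops} and Theorem \ref{exponents} to shuttle loops and exponents out of the way. The structural observation driving the argument is that, although $G$ is only $d$-regular, the graph $G^{\circ}$ obtained by looping every vertex behaves like a $(d+1)$-regular object: its bipartite double cover $G^{\circ}\times K_2$ is an honest bipartite graph that is $(d+1)$-regular on $2n$ vertices, because in $G^{\circ}$ the ``neighbours'' of a vertex $v$ are its $d$ neighbours in $G$ together with $v$ itself. Feeding $G^{\circ}\times K_2$ into Theorem \ref{bipartite} is exactly what produces $K_{d+1}$ (via $K_{d+1,d+1}$) on the right-hand side rather than $K_d$; working instead with the $d$-regular graph $G\times K_2$ is what led to the weaker bound in the earlier version of this paper.

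Concretely, I would first rewrite the left side: by Lemma \ref{loops} (applied with $H^B$ in place of $H$) and Theorem \ref{exponents},
\[\hom(G, l(H^B)) = \hom(G^{\circ}, H^B) = \hom(G^{\circ}\times B, H).\]
Then I would apply Theorem \ref{bipartite} to the $(d+1)$-regular bipartite graph $G^{\circ}\times K_2$ (on $2n$ vertices) with target graph $H^B$, obtaining
\[\hom(G^{\circ}\times K_2, H^B) \leq \hom(K_{d+1,d+1}, H^B)^{\frac{n}{d+1}}.\]
It remains to identify both sides. For the left side, Theorem \ref{exponents} and commutativity/associativity of $\times$ give $\hom(G^{\circ}\times K_2, H^B)=\hom\big((G^{\circ}\times B)\times K_2, H\big)$; since $B$ is bipartite so is $G^{\circ}\times B$, so Lemma \ref{l2toK2} and the identity $C\times l_2 = 2C$ turn this into $\hom(G^{\circ}\times B, H)^2 = \hom(G, l(H^B))^2$. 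For the right side, one checks that $K_{d+1}^{\circ}\times K_2 = K_{d+1,d+1}$ — every vertex of $K_{d+1}^{\circ}$ is adjacent, possibly through its loop, to every vertex, so the double cover carries all edges across the bipartition — and the identical manipulation yields $\hom(K_{d+1,d+1}, H^B) = \hom(K_{d+1}^{\circ}\times B, H)^2 = \hom(K_{d+1}^{\circ}, H^B)^2 = \hom(K_{d+1}, l(H^B))^2$. Substituting both identities into the displayed inequality and taking square roots gives the theorem.

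I expect the only real content to be the choice of $G^{\circ}\times K_2$ and the identification $K_{d+1,d+1}=K_{d+1}^{\circ}\times K_2$; once these are in place the remaining steps are forced bookkeeping licensed entirely by the identities established earlier for $\times$, $(\cdot)^{(\cdot)}$, and $l(\cdot)$. The points I would be careful about are the regularity count for $G^{\circ}$ (so I would take $G$ loop-free, consistent with the rest of the paper), and the degenerate cases — $B$ edgeless, or $d=0$ — where several homomorphism counts collapse to $0$ or $1$; these need a quick check but do not obstruct either the regularity argument or the final square-root step.
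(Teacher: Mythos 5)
Your proposal is correct and follows essentially the same route as the paper: reduce via Lemma \ref{loops}, feed the $(d+1)$-regular bipartite graph $G^{\circ}\times K_2$ into Theorem \ref{bipartite}, identify $K_{d+1,d+1}=K_{d+1}^{\circ}\times K_2$, and use Lemma \ref{l2toK2} to collapse the $K_2$ factors on both sides. The only difference is presentational — you inline the computation that the paper delegates to Theorem \ref{bipartiteClosure} and square both sides instead of taking roots step by step — and you correctly flag the $(d+1)$-regularity point that was the error in the earlier version.
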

\begin{proof}
Notice that $K_{d,d}$ can be written as $K_d^{\circ} \times K_2$. This can be used as follows.
\begin{align*}
\hom(G, l(H^B)) &= \hom(G^{\circ},H^B)\tag{Lemma \ref{loops}}\\
&= \hom(G^{\circ} \times K_2,H^B)^{\frac{1}{2}}\tag{Theorem \ref{bipartiteClosure}}\\
&\leq \hom(K_{d+1, d+1},H^B)^{\frac{n}{2(d+1)}}\tag{Theorem \ref{bipartite}}\\
&= \hom(K_{d+1}^{\circ} \times K_2,H^B)^{\frac{n}{2(d+1)}}\\
&= \hom(K_{d+1}^{\circ},\left(H^B\right)^{K_2})^{\frac{n}{2(d+1)}}\\
&= \hom(K_{d+1}^{\circ},H^{B \times K_2})^{\frac{n}{2(d+1)}}\\
&= \hom(K_{d+1}^{\circ},H^{B \times l_2})^{\frac{n}{2(d+1)}}\tag{Lemma \ref{l2toK2}}\\
&= \hom(K_{d+1}^{\circ},\left(H^B\right)^{l_2})^{\frac{n}{2(d+1)}}\\
&= \hom(K_{d+1}^{\circ} \times l_2,H^B)^{\frac{n}{2(d+1)}}\\
&= \hom(K_{d+1}^{\circ},H^B)^{\frac{n}{d+1}}\\
&= \hom(K_{d+1},l(H^B))^{\frac{n}{d+1}}.\tag{Lemma \ref{loops}}\\
\end{align*}
One important detail here is that $G^{\circ} \times K_2$ is actually ${d+1}$-regular, since the loops contribute once to each copy of the vertex set of $G$. (In an earlier version of this paper $G^{\circ} \times K_2$ was erroneously treated as being $d$-regular; this was the error pointed out to us by E. Cohen.)
\end{proof}

This result is interesting in that it gives some number of copies of $K_{d+1}$ as the maximizer for $l(H^B)$, while we know from Theorem \ref{bipartiteClosure} that for $H^B$ the maximizer is some number of copies of $K_{d,d}$. That is, removing the unlooped vertices of $H$ is enough to change the maximizer.

\end{document}